\documentclass{amsart}
\usepackage{fullpage}
\usepackage{amsmath,amsfonts,amssymb}
\usepackage{tikz,pstricks}
\usepackage[mathcal]{euscript}
\newtheorem{theorem}{Theorem}[section]
\newtheorem{lemma}[theorem]{Lemma}

\newtheorem{proposition}[theorem]{Proposition}
\newtheorem{defi}{Definition}[section]
\newtheorem{remark}{Remark}[section]
%
% Raccourcis :
\newcommand{\apriori}{\emph{a priori\/}}
\newcommand{\card}{{\rm card}}
\newcommand{\ddt}{\hspace{.5ex}{\rm d}t}
\newcommand{\diam}{{\rm diam}}
\newcommand{\DIR}{{\rm D}}
\newcommand{\disc}{\mathcal{M}}
\newcommand{\discd}{\mathcal{D}}
\newcommand{\dom}{\Omega}
\newcommand{\ds}{\hspace{.5ex}{\rm d}s}
\newcommand{\dt}{\delta \hspace{-0.05ex} t}
\newcommand{\dtphi}{\partial_{t,\discd}(\varphi)}
\newcommand{\dtu}{\partial_{t,\discd}(u)}
\newcommand{\dx}{\hspace{.5ex}{\rm d}x}
\newcommand{\edge}{\sigma}
\newcommand{\edges}{\mathcal{E}}
\newcommand{\edgesext}{\mathcal{E}_{{\rm ext}}}
\newcommand{\edgesint}{\mathcal{E}_{{\rm int}}}
\newcommand{\eqdef}{:=}  %{\stackrel{\mathrm{def}}{=}}
\newcommand{\grad}{\nabla}
\newcommand{\ie}{\emph{i.e.\/}}
\newcommand{\lap}{\Delta}
\newcommand{\mesh}{\mathcal{M}}
\newcommand{\NEU}{{\rm N}}
\newcommand{\nnn}{{n\in\xN}}
\newcommand{\nti}{{n \to + \infty}}
\newcommand{\norm}[2]{\left\| #1 \right\|_{#2}}
\newcommand{\normedeu}[2]{\|#1\|_{L^2(#2)}}
\newcommand{\normLdHmu}[1]{\|#1\|_{\xLtwo(0,T;\, \xHdisc^{-1})}}
\newcommand{\normLdHu}[1]{\|#1\|_{\xLtwo(0,T;\, \xHdisc^1)}}
\newcommand{\normLds}[1]{\|#1\|_{\xLtwo(0,T;\, {\rm H}_{\disc,\ast})}}
\newcommand{\normLdms}[1]{\|#1\|_{\xLtwo(0,T;\, {\rm H}_\disc^\ast)}}
\newcommand{\normemundisc}[1]{\|#1\|_{-1,\disc}}
\newcommand{\normeundisc}[1]{\|#1\|_{1,\disc}}
\newcommand{\normes}[1]{\|#1\|_{\ast}}
\newcommand{\normems}[1]{\|#1\|^{\ast}}
\newcommand{\snormeundisc}[1]{|#1|_{1,\disc}}
\newcommand{\snormLdHu}[1]{|#1|_{\xLtwo(0,T;\, \xHdisc^1)}}
\newcommand{\xCinfty}{{\rm C}^{\infty}}
\newcommand{\xHdisc}{{\rm H}_{\disc}}
\newcommand{\xHdiscd}{{\rm H}_{\discd}}
\newcommand{\xHn}[1]{{\rm H}^#1}
\newcommand{\xHone}{{\rm H}^{1}}
\newcommand{\xLinfty}{{\rm L}^\infty}
\newcommand{\xLn}[1]{{\rm L}^#1}
\newcommand{\xLtwo}{{\rm L}^2}
\newcommand{\xN}{{\mathbb{N}}}
\newcommand{\xR}{{\mathbb{R}}}
%
% ----------------------------------------------------------------------------------------------------------------------------------
%
\begin{document}
\title[A scheme for  the {\bf P}$_1$ radiative model]
{Analysis of a fractional-step scheme for the {\bf P}$_1$ radiative diffusion model}

\author{T. Gallou\"et}
\address{Universit\'e d'Aix-Marseille (AMU)}
\email{thierry.gallouet@univ-amu.fr}

\author{R. Herbin}
\address{Universit\'e d'Aix-Marseille (AMU)}
\email{raphaele.herbin@univ-amu.fr}

\author{A. Larcher}
\address{Institut de Radioprotection et de S\^{u}ret\'{e} Nucl\'{e}aire (IRSN)}
\email{larcher@kth.se}

\author{J.C. Latch\'e}
\address{Institut de Radioprotection et de S\^{u}ret\'{e} Nucl\'{e}aire (IRSN)}
\email{jean-claude.latche@irsn.fr}

\begin{abstract}
We address in this paper a nonlinear parabolic system, which is built to retain the main mathematical difficulties of the P1 radiative diffusion physical model.
We propose a finite volume fractional-step scheme for this problem enjoying the following properties.
First, we show that each discrete solution satisfies \apriori\ $\xLinfty$-estimates, through a discrete maximum principle; by a topological degree argument, this yields the existence of a solution, which is proven to be unique.
Second, we establish uniform (with respect to the size of the meshes and the time step) $\xLtwo$-bounds for the space and time translates; this proves, by the Kolmogorov theorem, the relative compactness of any sequence of solutions obtained through a sequence of discretizations the time and space steps of which tend to zero; the limits of converging subsequences are then shown to be a solution to the continuous problem.
Estimates of time translates of the discrete solutions are obtained through the formalization of a generic argument, interesting for its own sake.
\end{abstract}
\maketitle

%------------------------------------------------------------------------------------------------------------------------------------
%
\tableofcontents
%
%------------------------------------------------------------------------------------------------------------------------------------
%
\section{Introduction}

We address in this paper the following nonlinear parabolic system:
\begin{equation}
\left | \begin{array}{ll} \displaystyle
\frac{\partial u}{\partial t} - \lap u + u^4 -\varphi = 0
& \displaystyle \mbox{for } (x,t) \in \dom \times (0,T),
\\[1ex] \displaystyle
\varphi - \lap \varphi -u^4=0
& \displaystyle \mbox{for } (x,t) \in \dom \times (0,T),
\\[1ex] \displaystyle
u(x,0)=u_0(x)
& \displaystyle \mbox{for } x \in \dom,
\\[1ex] \displaystyle
u(x,t)=0
& \displaystyle \mbox{for } (x,t) \in \partial \dom \times (0,T),
\\[1ex] \displaystyle
\grad \varphi \cdot n =0
& \displaystyle \mbox{for } (x,t) \in \partial \dom \times (0,T),
\end{array} \right.
\label{pbcont}\end{equation}
where $\dom$ is a connected bounded subset of $\xR^d$, $d=2$ or $d=3$, which is supposed to be polygonal ($d=2$) or polyhedral ($d=3$), $T < \infty$ is the final time, $u$ and $\varphi$ are two real-valued functions defined on $\dom \times [0,T)$ and $\partial \dom$ stands for the boundary of $\dom$ of outward normal $n$.
The initial value for $u$, denoted by $u_0$, is supposed to lie in $\xLinfty(\dom) \cap \xHone_0(\dom)$ and to satisfy $u_0(x) \geq 0$ for almost every $x \in \dom$.

\medskip
This system of partial differential equations is inspired from a simplified radiative transfer physical model, the so-called {\bf P}$_1$ model, sometimes used in computational fluid dynamics for the simulation of high temperature optically thick flows, as encountered for instance in fire modelling (see e.g. \cite{siegel-howell,franck-07-app} for expositions of the theory or the documentation of the CFX or FLUENT commercial codes for a synthetic description).

In this context, the unknown $u$ stands for the temperature, $\varphi$ for the radiative intensity and the first equation is the energy balance.
System \eqref{pbcont} has been derived with the aim of retaining the main mathematical difficulties of the initial physical model; in particular, adding a convection term in the first equation would only require minor changes in the theory developed hereafter.

\medskip
In this short paper, we give a finite volume scheme for the discretization of \eqref{pbcont} and prove the existence and uniqueness of the discrete solution and its convergence to a solution of \eqref{pbcont}, thus showing that this problem indeed admits a solution, in a weak sense which will be defined.

% -------------------------------------------------------------------------------------------------------------------------

\section{The finite volume scheme}\label{sec:FV}

Even though the arguments developed in this paper are valid for any general admissible discretization in the sense of \cite[Definition 9.1, p. 762]{eym-00-fin}, we choose, for the sake of simplicity, to restrict the presentation to simplicial meshes.
We thus suppose given a triangulation $\mesh$ of $\dom$, that is a finite collection of d-simplicial control volumes $K$, pairwise disjoint, and such that $\displaystyle \bar \dom  =\cup_{K \in \mesh} \bar K$; the mesh is supposed to be conforming in the sense that two neighbouring simplices share a whole face (\ie\ there is no hanging node). 
In addition, we assume that, for any $K \in \mesh$, the circumcenter $x_K$ of $K$ lies in $K$; note that, for each neighbouring control volumes $K$ and $L$, the segment $[x_K,x_L]$ is orthogonal to the face $K|L$ separating $K$ from $L$.

\medskip
For each simplex $K$, we denote by $\edges(K)$ the set of the faces of $K$ and by $|K|$ the measure of $K$.
The set of faces of the mesh $\edges$ is split into the set $\edgesint$ of internal faces (\ie\ separating two control volumes) and the set $\edgesext$ of faces included in the domain boundary.
For each internal face $\edge=K|L$, we denote by $|\edge|$ the (d-1)-dimensional measure of $\edge$ and by $d_\edge$ the distance $d(x_K,x_L)$; for an external face $\edge$ of a control volume $K$, $d_\edge$ stands for distance from $x_K$ to $\edge$.
The regularity of the mesh is characterized by the parameter $\theta_\mesh$ defined by:
\begin{equation}
\theta_\mesh \eqdef \min_{K \in \mesh}\  \frac{\rho_K}{h_K}
\label{regul}\end{equation}
where $\rho_K$ and $h_K$ stands for the diameter of the largest ball included in $K$ and the diameter of $K$, respectively.

\medskip
We denote by $\xHdisc(\dom)\subset \xLtwo(\dom)$ the space of functions which are piecewise constant over each control volume $K\in\mesh$.
For all $u\in \xHdisc(\dom)$ and for all $K\in\mesh$, we denote by $u_K$ the constant value of $u$ in $K$.
The space $\xHdisc(\dom)$ is equipped with  the following classical Euclidean structure:
for $(u,v)\in (\xHdisc(\dom))^2$, we define the following inner product:
\begin{equation}\label{amudis}
[u,v]_\disc \eqdef
\sum_{\edge \in \edgesint\ (\edge=K|L)} \frac{|\edge|}{d_\edge}\ (u_L - u_K)(v_L - v_K)
+\sum_{\edge \in \edgesext\ (\edge \in \edges(K))} \frac{|\edge|}{d_\edge}\ u_K\ v_K .
\end{equation}
Thanks to the discrete Poincar\'e inequality \eqref{poindis} given below, this scalar product defines a norm on $\xHdisc(\dom)$:
\begin{equation}\label{norme_discrete}
\normeundisc{u} \eqdef  [u,u]_\disc ^{1/2}.
\end{equation}
The following discrete Poincar\'e inequality holds (see Lemma 9.1, p. 765, in \cite{eym-00-fin}):
\begin{equation}
\normedeu{u}{\dom} \le \diam(\dom)\ \normeundisc{u} \qquad
\forall u \in \xHdisc(\dom).
\label{poindis}\end{equation}
We also define the following semi-inner product and semi-norm:
\[
<u,v>_\disc \eqdef
\sum_{\edge \in \edgesint\ (\edge=K|L)} \frac{|\edge|}{d_\edge}\ (u_L - u_K)(v_L - v_K),
\qquad \snormeundisc{u} \eqdef <u,u>_\disc^{1/2}.
\]
These inner products can be seen as discrete analogues to the standard $\xHone$-inner product, with an implicitly assumed zero boundary condition in the case of the inner product $[\cdot,\cdot]_\disc$.
\\
For any function $u \in \xHdisc(\dom)$, we also define the following discrete $\xHn{{-1}}$-norm:
\[
\normemundisc{u} \eqdef \sup_{\substack{v \in \xHdisc(\dom) \\ v \neq 0}} \quad \frac 1 {\normeundisc{v}}  \int_\dom u\,v \dx.
\]
By inequality \eqref{poindis}, the $\normemundisc{\cdot}$-norm is controlled by the $\xLtwo(\dom)$-norm.

\medskip
The discrete Laplace operators associated with homogeneous Dirichlet and Neumann boundary conditions, denoted by $\lap_{\mesh,\DIR}$ and $\lap_{\mesh,\NEU}$ respectively, are defined as follows:
\begin{equation}
\begin{array}{l} \displaystyle
\forall \psi \in \xHdisc(\dom),
\\[2ex] \qquad \begin{array}{l} \displaystyle
(\lap_{\mesh,\NEU} (\psi))_K = \frac{1}{|K|} \sum_{\edge=K|L} \frac{|\edge|}{d_\edge}\ (\psi_L - \psi_K),
\\ \displaystyle
(\lap_{\mesh,\DIR} (\psi))_K=(\lap_{\mesh,\NEU} (\psi))_K+ \frac{1}{|K|} \sum_{\edge \in \edgesext\cap \edges(K)} \frac{|\edge|}{d_\edge}\ (-\psi_K).
\end{array}\end{array}
\label{laplacians}\end{equation}
The links between these operators and the above defined inner products is clarified by the following identities:
\[
\begin{array}{ll}\displaystyle
\forall \psi \in \xHdisc(\dom), \qquad 
& \displaystyle
\sum_{K\in\mesh} -|K|\ \psi_K\ (\lap_{\mesh,\NEU} (\psi))_K = <\psi,\psi>_\disc
\\[4ex] \displaystyle
\mbox{and}
& \displaystyle
\sum_{K\in\mesh} -|K|\ \psi_K\ (\lap_{\mesh,\DIR} (\psi))_K = [\psi,\psi]_\disc .
\end{array}
\]

\medskip
Finally, we suppose given a partition of the interval $(0,T)$, which we assume regular for the sake and simplicity, with $t^0=0,\dots, t^n=n\, \dt, \dots t^N=T$.

\medskip
A each time $t^n$, an approximation of the solution $(u^n,\varphi^n) \in \xHdisc(\dom) \times \xHdisc(\dom)$ is given by the following finite volume scheme:
\begin{subequations}
	\begin{align}
	&\forall K \in \mesh, \nonumber \\
	& \qquad  \frac{u_K^{n+1} -u_K^n}{\dt} - (\lap_{\mesh,\DIR} (u^{n+1}))_K + |u_K^{n+1}|\ (u_K^{n+1})^3 - \varphi_K^n=0,  \, \mbox{ for } n=0,\ldots, N-1, \label{pbdiscu}\\
	& \qquad  \varphi_K^{n+1} - (\lap_{\mesh,\NEU} (\varphi^{n+1}))_K - (u_K^{n+1})^4=0 \, \mbox{ for } n=0,\ldots, N-1,  \label{pbdiscphi}\\
	& \qquad u_K^0= \frac{1}{|K|} \int_K u_0(x)\,\dx,  \label{initialisation}\\
	& \qquad  \varphi_K^{0} - (\lap_{\mesh,\NEU} (\varphi^{0}))_K - (u_K^{0})^4=0  \label{initialisationphi}
\end{align}
\label{pbdisc}
\end{subequations}
Note that in \eqref{pbdiscu}, the term $u^4$ is discretized as $|u_K^{n+1}|\ (u_K^{n+1})^3$ to ensure positivity (see proof of Proposition \ref{linfty_est} and Remark \ref{qodd} below).
% -------------------------------------------------------------------------------------------------------------------------

\section{A priori $\xLinfty$ estimates, existence and uniqueness of the discrete solution}

We prove in this section that the numerical scheme \eqref{pbdisc} admits a unique solution.
The proof is based on a topological degree argument, which requires some a priori estimates on possible solutions to \eqref{pbdisc}.
Uniqueness is based on the positivity property of the scheme. 

\smallskip

\begin{proposition}[Existence and uniqueness of the approximate solution]~ 

\begin{enumerate}
\item The scheme \eqref{pbdisc} has a unique solution.
\item For $0 \leq n \leq N$, the unknown $\varphi^n$ satisfies the following estimate:
\[
\forall K\in \mesh,\quad 0 \leq \varphi_K^n \leq \left[ \max_{L \in \mesh} u_L^n \right]^4.
\]
\item For $1 \leq n \leq N$, the unknown $u^n$ satisfies the following estimate:
\[
\forall K\in \mesh,\quad 0 \leq u_K^n \leq \max_{L \in \mesh} u_L^{n-1}.
\]
\end{enumerate}
\label{linfty_est}\end{proposition}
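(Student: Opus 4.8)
The plan is to prove the three assertions in a logically efficient order: the maximum-principle estimates (2) and (3) first, on the assumption that a solution exists, then existence via topological degree, then uniqueness. The reason is that the degree argument in step (1) needs the \apriori\ $\xLinfty$-bounds, so those bounds must be established for \emph{any} solution of the scheme (and more generally for solutions of a homotopy) before existence can be invoked.

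For the estimate \eqref{pbdiscphi} on $\varphi^{n+1}$ (assertion (2)), I would argue by a discrete maximum principle on the equation $\varphi_K^{n+1} - (\lap_{\mesh,\NEU}(\varphi^{n+1}))_K = (u_K^{n+1})^4$. Let $K_0$ be a control volume where $\varphi^{n+1}$ attains its minimum over the mesh. At such a cell every term $(\varphi_L^{n+1}-\varphi_{K_0}^{n+1})$ in $(\lap_{\mesh,\NEU}(\varphi^{n+1}))_{K_0}$ is nonnegative, so $-(\lap_{\mesh,\NEU}(\varphi^{n+1}))_{K_0}\le 0$, whence $\varphi_{K_0}^{n+1} \ge \varphi_{K_0}^{n+1} - (\lap_{\mesh,\NEU}(\varphi^{n+1}))_{K_0} = (u_{K_0}^{n+1})^4 \ge 0$. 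The upper bound is symmetric: at a maximizing cell $K_1$ one gets $\varphi_{K_1}^{n+1} \le (u_{K_1}^{n+1})^4 \le \left[\max_L u_L^{n+1}\right]^4$. The same argument applied to \eqref{initialisationphi} handles $n=0$.

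For assertion (3) I would run the analogous discrete maximum principle on \eqref{pbdiscu}, now using the Dirichlet Laplacian and exploiting the sign-correct discretization $|u_K^{n+1}|(u_K^{n+1})^3$. Rewrite the equation as $(1+\dt\,\cdot)$-type relation: at a cell $K_0$ minimizing $u^{n+1}$, the internal-face contributions to $-(\lap_{\mesh,\DIR}(u^{n+1}))_{K_0}$ are nonpositive and the boundary term $-\tfrac{1}{|K_0|}\sum \tfrac{|\edge|}{d_\edge}u_{K_0}^{n+1}$ has sign $-u_{K_0}^{n+1}$; testing the hypothesis $u_{K_0}^{n+1}<0$ forces $|u_{K_0}^{n+1}|(u_{K_0}^{n+1})^3\ge 0$ and $\varphi_{K_0}^n\ge 0$ (from (2) at the previous level), and one derives $u_{K_0}^{n+1}\ge u_{K_0}^n\ge 0$ after multiplying by $\dt$ and collecting the favorable signs, giving the lower bound $0$. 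This is exactly where the absolute value is essential, since $(u_K^{n+1})^4$ would destroy the sign control when $u_K^{n+1}<0$; I would flag this as \emph{the main technical point}. The upper bound follows at a maximizing cell $K_1$ by the same bookkeeping, using $\varphi_{K_1}^n\le\left[\max_L u_L^n\right]^4$ together with $|u_{K_1}^{n+1}|(u_{K_1}^{n+1})^3\ge (u_{K_1}^{n+1})^4$ to dominate the reaction term, which yields $u_{K_1}^{n+1}\le \max_L u_L^n$.

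For existence (assertion (1), first half) I would set up a topological degree argument: define a homotopy $H(\cdot,\lambda)$ from the scheme at $\lambda=1$ to a trivially solvable linear problem at $\lambda=0$ (e.g. scaling the nonlinear terms and the coupling by $\lambda$), verify that the \apriori\ estimates of (2)–(3) hold uniformly in $\lambda\in[0,1]$ so that all possible zeros stay in a fixed ball, conclude that the degree is homotopy-invariant and equal to the nonzero degree of the linear problem, and hence that a solution exists at $\lambda=1$. Since at each time step the scheme decouples into first solving for $u^{n+1}$ (given $\varphi^n$) and then for $\varphi^{n+1}$, I would apply the degree argument to the finite-dimensional map for $u^{n+1}$ and then solve the linear system \eqref{pbdiscphi} for $\varphi^{n+1}$ directly, proceeding by induction on $n$. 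For uniqueness I would take two solutions, subtract the equations, test the difference with itself in the $[\cdot,\cdot]_\disc$ inner product, and use monotonicity: the map $s\mapsto |s|s^3$ is nondecreasing, so its contribution has the right sign, and the $\varphi$-coupling is controlled because $\varphi^n$ is already determined by $u^n$ through a linear monotone problem; combined with the coercivity from \eqref{poindis} this forces the difference to vanish, again by induction on $n$.
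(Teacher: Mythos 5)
Your proposal is correct in substance and follows the same overall architecture as the paper: induction on the time step, \apriori\ $\xLinfty$-bounds established for any possible solution (and uniformly along the homotopy), then existence of $u^{n+1}$ by a topological degree argument followed by direct solution of the linear Neumann system for $\varphi^{n+1}$, then uniqueness. Two genuine differences are worth noting. First, where you invoke a pointwise maximum principle at an extremal cell for every bound, the paper phrases the nonnegativity of $\varphi^n$ and of $u^{n+1}$, as well as the upper bound on $\varphi^{n+1}$ (after subtracting the constant function with value $(\max_L u_L^{n+1})^4$), as M-matrix properties of the associated linear or quasi-linear systems, reserving the extremal-cell contradiction only for the upper bound on $u^{n+1}$; since the two-point flux discretization yields M-matrices, the two formulations are equivalent and this is essentially a difference of presentation. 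Second, your uniqueness argument takes a genuinely different route: you test the difference of two solutions against itself and use the monotonicity of $s\mapsto |s|s^3$, so the nonlinear contribution is nonnegative and $\delta u=0$ follows from the $\frac 1 \dt \Vert\delta u\Vert_{\xLtwo(\dom)}^2$ term alone (the Poincar\'e inequality you mention is not even needed); the paper instead factors $a^4-b^4=(a-b)(a^3+a^2b+ab^2+b^3)$, uses the already-established nonnegativity of both solutions to make the zero-order coefficient nonnegative, and concludes via the M-matrix property. Your variant is slightly more robust since it does not rely on the sign of the solutions. One local slip to correct: at a minimizing cell with $u_{K_0}^{n+1}<0$ one has $|u_{K_0}^{n+1}|\,(u_{K_0}^{n+1})^3=-(u_{K_0}^{n+1})^4\le 0$, not $\ge 0$ as you wrote; what your argument actually needs (and what is true) is that this term, once moved to the right-hand side, contributes $-\dt\,|u_{K_0}^{n+1}|\,(u_{K_0}^{n+1})^3\ge 0$, so the conclusion $u_{K_0}^{n+1}\ge u_{K_0}^n\ge 0$ stands, but the sign as stated is reversed.
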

\begin{proof}
\underline{Step one}: positivity of the unknowns.\\[1ex]
We first observe that, from its definition \eqref{initialisation} and thanks to the fact that $u_0$ is non-negative, $u^0$ is a non-negative function.
Let us suppose that this property still holds at time step $n$.
We write the second equation of the scheme \eqref{pbdiscphi} as:
\[
\varphi_K^n - (\lap_{\mesh,\NEU}(\varphi^n))_K= (u_K^n)^4.
\]
This set of relations is a linear system for $\varphi^n$, and the matrix of  this system is clearly an M-matrix: indeed, from the definition \eqref{laplacians} of $\lap_{\mesh,\NEU}$, it can be easily checked that its diagonal is strictly dominant and has only positive entries, and all its off-diagonal entries are non-positive.
Since the right-hand side of this equation is non-negative, $\varphi^n$ is also non-negative.
The first equation  \eqref{pbdiscu} of the scheme can now be recast as:
\begin{equation}
\left[ \frac 1 \dt + |u_K^{n+1}|\ (u_K^{n+1})^2 \right] u_K^{n+1} - (\lap_{\mesh,\DIR}(u^{n+1}))_K = \frac 1 \dt u_K^n + \varphi^n_K.
\label{first_eq_linear}\end{equation}
This set of relations can be viewed as a  (nonlinear) matrix system for the unknown $u^{n+1}$; the matrix of this system depends on $u^{n+1}$ but is also an M-matrix for any values of $u^{n+1}$.
Since we know that $\varphi^n\geq 0$, the right-hand side of this equation is by assumption non-negative and so is $u^{n+1}$.

\bigskip
\noindent \underline{Step two}: upper bounds.\\[1ex]
Let $\bar \varphi$ be a constant function of $\xHdisc(\dom)$.
From the definition \eqref{laplacians} of $\lap_{\mesh,\NEU}$, we see that $\lap_{\mesh,\NEU}(\bar \varphi)=0$;  the second relation of the scheme \eqref{pbdiscphi} thus implies:
\[
(\varphi^{n+1}-\bar \varphi)_K - (\lap_{\mesh,\NEU}(\varphi^{n+1}-\bar \varphi))_K= (u_K^{n+1})^4 - \bar \varphi_K.
\]
Choosing for the constant value of $\bar \varphi$ the quantity $(\max_{K\in\mesh} u_K^{n+1})^4$ yields a non-positive right-hand side, and so, from the above mentioned property of the matrix of this linear system, $(\varphi^{n+1}-\bar \varphi)_K \leq 0,\ \forall K \in \mesh$, which equivalently reads:
\begin{equation}
\varphi_K^{n+1} \leq (\max_{L\in\mesh} u_L^{n+1})^4, \quad \forall K \in \mesh.
\label{upp_phi}\end{equation}
Let us now turn to the estimate of the first unknown $u^{n+1}$.
Let $K_0$ be a control volume where $u^{n+1}$ reaches its maximum value.
From the definition \eqref{laplacians} of $\lap_{\mesh,\DIR}$, it appears that:
\[
-(\lap_{\mesh,\DIR}(u^{n+1}))_{K_0} \geq 0.
\]
The first  equation \eqref{pbdisc} of the scheme for the unknown $(u^{n+1}))_{K_0}$ reads:
\begin{equation}
\frac{1}{\dt}\,(u_{K_0}^{n+1}-u_{K_0}^n) - (\lap_{\mesh,\DIR}(u^{n+1}))_{K_0} + \left[ |u_{K_0}^{n+1}|\ (u_{K_0}^{n+1})^3 - \varphi_{K_0}^n \right]=0.
\label{upp_u}\end{equation}
By the inequality \eqref{upp_phi}, we see that supposing that $u_{K_0}^{n+1} > \max_{K\in\mesh} u_K^n$ yields that the first and third term of the preceding relation are positive, while the second one is non-negative, which is in contradiction with the fact that their sum is zero.

\bigskip
\noindent \underline{Step three}: existence of a solution.\\[1ex]
Let us suppose that we have obtained a solution to the scheme up to time step $n$.
Let the function $F$ be defined as follows:
\[
\left|\begin{array}{l} \begin{array}{lcl}
F :\xR^{\card(\mesh)} \times [0,1] & \longrightarrow & \xR^{\card (\mesh)},
\\[1ex] \displaystyle
(U,\alpha) = \left( (u_K)_{K \in \mesh},\ \alpha \right) & \mapsto &F(U,\alpha)   =\displaystyle (v_K)_{K \in \mesh}\quad \mbox{ such that:}
\end{array}
\\ \displaystyle \hspace{7ex}
\forall K \in \mesh, \quad v_K= \frac{1}{\dt}\,(u_K-u_K^n) - (\lap_{\mesh,\DIR}(u))_K + \alpha \left[ |u_K|\,(u_K)^3 - \varphi_K^n \right].
\end{array}\right.
\]
It is clear that  $u^{n+1}$ is a solution to  \eqref{pbdiscu} if and only if:
\begin{equation}
F((u^{n+1}_K)_{K\in \mesh},1)=0.
\label{sch_deg}\end{equation}
First, we observe that the function $F_0$, which maps $\xR^{\card (\mesh)}$ onto $\xR^{\card (\mesh)}$ and is defined by $F_0(U)=F(U,0)$, is affine and one-to-one.
Second, a straightforward adaptation of steps one and two yields that the estimates on any possible solution $u^{n+1}$ to  \eqref{sch_deg}  hold uniformly for $\alpha \geq 0$.
The existence of a solution to \eqref{sch_deg} then follows by a topological degree argument (see e.g. \cite{deimling}).

\noindent Finally, the existence (and uniqueness) of the solution to the second equation \eqref{pbdiscphi} of the scheme, which is a linear system, follows from the above mentioned properties of the associated matrix.

\bigskip
\noindent \underline{Step four:} uniqueness of the solution.\\[1ex]
Let us suppose that the solution is unique up to step $n$ and that there exist two solutions $u^{n+1}$ and $v^{n+1}$ to \eqref{pbdiscu}.
By the identity $a^4-b^4=(a-b)\,(a^3+a^2b+ab^2++b^3)$, the difference $\delta u= u^{n+1}-v^{n+1}$ satisfies the following system of equations:
\[
%\begin{array}{l} \displaystyle
\forall K \in \mesh,\quad 
\left[ \frac 1 \dt + \left((u_K^{n+1})^3 + (u_K^{n+1})^2 v_K^{n+1} + u_K^{n+1} (v_K^{n+1})^2 + (v_K^{n+1})^3\right) \right]
\ \delta u_K
%\qquad \\[3ex] \hfill \displaystyle
-(\lap_{\mesh,\DIR}(\delta u))_K=0.
%\end{array}
\]
Since we know from the precedent analysis that both $u_K^{n+1}$ and $v_K^{n+1}$ are non-negative, this set of relations can be seen as a matrix system for $\delta u$; the matrix of this system is again an M-matrix; we thus get $\delta u=0$, which proves the uniqueness of the solution.
\end{proof}
\begin{remark}[On the non-negativity]
We see from Relation \eqref{first_eq_linear} that the discretization of $u^4$ as a product of a positive quantity (here $|u^{n+1}|^p$) and $(u^{n+1})^q$ (where $p+q=4$) with $q$ odd (\ie\ $q=3$ or $q=1$) is essential to prove the non-negativity of $u^{n+1}$; note that we have indeed observed in practice some (non-physical) negative values when this term is discretized as $(u^{n+1})^4$.

We also wish to emphasize that the non-negativity is obtained here thanks to the fact that the discretization is performed with a two-point flux finite volume scheme, which yields an $M$-matrix for the discrete Laplace operator. 
Hence we have only considered here the admissible meshes in the sense of \cite[Definition 9.1, p. 762]{eym-00-fin}, since these meshes satisfy an orthogonality condition that yields the consistency of the  two-point flux approximation. 
On general meshes, several consistent schemes have been designed, but these  schemes do not ensure the positivity of the solution naturally; more on this subject may be found in e.g. \cite{eym-14-two}.
\label{qodd}\end{remark}
%
% -------------------------------------------------------------------------------------------------------------------------
%
\section{Convergence to a solution of the continuous problem}\label{sec:conv}

Let $\xHdiscd$ be the space of piecewise constant functions over each $K \times I^n$, for $K\in \mesh$ and $I^n=[t^n,\ t^{n+1}),\ 0\leq n \leq N-1$.
To each sequence $(u^n)_{n=0,N}$ of functions of $\xHdisc(\dom)$, we associate the function $u \in \xHdiscd$ defined by $
u(x,t)=u^n(x)$ for $t^n\leq t <t^{n+1},\ 0\leq n \leq N-1$.
In addition, for any $u \in \xHdiscd$, we define $\dtu \in \xHdiscd$ by 
\[
 \dtu(x,t)=\dtu^n(x) \mbox{ for } t^n\leq t <t^{n+1},\ 0\leq n \leq N-1
\]
 where the function $\dtu^n\in \xHdisc(\dom)$ is defined by:
\[
\dtu^n(x) \eqdef \frac{u^{n+1}(x)-u^n(x)}{\dt}
\qquad \bigl(\mbox{\ie\ } \dtu^n_K=\frac{u^{n+1}_K-u^n_K}{\dt},\quad \forall K \in \mesh \bigr).
\]

\bigskip

For any function $u \in \xHdiscd$, we define the following norms and semi-norms:
\[
\begin{array}{l} \displaystyle
\normLdHu{u}^2 \eqdef \dt\ \sum_{n=0}^N \normeundisc{u^n}^2,
\\[3ex] \displaystyle
\normLdHmu{u}^2 \eqdef \dt\ \sum_{n=0}^{N-1} \normemundisc{u^n}^2,
\\[3ex] \displaystyle
\snormLdHu{u}^2 \eqdef \dt\ \sum_{n=0}^N \snormeundisc{u^n}^2.
\end{array}
\]
The norms $\normLdHu{\cdot}$ and $\snormLdHu{\cdot}$ can be seen as discrete equivalents of the $\xLtwo(0,T;\xHone(\dom))$-norm, and $\normLdHmu{\cdot}$ may be considered as a discrete $\xLtwo(0,T;\xHn{{-1}}(\dom))$-norm.

\bigskip
Let us now derive some estimates on the solution to the scheme \eqref{pbdisc}, which will be useful to get some compactness on sequences of approximate solutions.
\begin{proposition}[Estimates in energy norms]\label{prop:energy_bound}
Let $u$ and $\varphi$ be the functions of $\xHdiscd$ associated to the solution $(u^n)_{0 \leq n \leq N} \in \xHdisc(\dom)^{N+1}$ and $(\varphi^n)_{0 \leq n \leq N} \in \xHdisc(\dom)^{N+1}$ of the scheme \eqref{pbdisc}.
Then the following estimate holds:
\begin{equation}
\begin{array}{l} \displaystyle
\normLdHu{u} + \normLdHmu{\dtu} + \normedeu{\varphi}{(0,T)\times \dom} 
\hspace{10ex}\\[2ex] \displaystyle \hfill
+ \snormLdHu{\varphi} + \normedeu{\dtphi}{(0,T)\times \dom} \leq c_{\rm e},
\end{array}
\label{energy_bound}\end{equation}
where the real number $c_{\rm e}$ only depends on $\dom$, the initial data $u_0$ and (as a decreasing function) on the parameter $\theta_\mesh$ characterizing the regularity of the mesh, defined by \eqref{regul}.
\end{proposition}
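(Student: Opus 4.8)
The plan is to obtain the five contributions in \eqref{energy_bound} by testing each equation of the scheme against a suitable unknown, summing over the control volumes and over the time levels with weight $\dt$, and systematically exploiting the $\xLinfty$-bounds already established in Proposition \ref{linfty_est}: both $u^n$ and $\varphi^n$ are bounded, uniformly in $n$ and in the discretization, by constants depending only on $\norm{u_0}{\xLinfty(\dom)}$. These bounds turn every zeroth-order and coupling term into a harmless, controlled quantity, so the only genuinely delicate estimate is the $\xLtwo$-bound on $\dtphi$, which I return to last. Throughout I use the two discrete Green identities recalled before the statement, which convert $-\sum_K|K|\psi_K(\lap_{\mesh,\DIR}(\psi))_K$ into $\normeundisc{\psi}^2$ and $-\sum_K|K|\psi_K(\lap_{\mesh,\NEU}(\psi))_K$ into $\snormeundisc{\psi}^2$, together with their bilinear (polarised) versions.

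First I treat $u$. Multiplying \eqref{pbdiscu} by $|K|\,u_K^{n+1}\,\dt$ and summing over $K$ and $n$, the discrete time-derivative term is handled by the identity $2(a-b)a = a^2-b^2+(a-b)^2$, which telescopes and leaves $\tfrac12\normedeu{u^N}{\dom}^2$ plus nonnegative increments; the diffusion term yields $\dt\sum_n\normeundisc{u^{n+1}}^2$; the reaction term $\dt\sum_n\sum_K|K|(u_K^{n+1})^5$ is nonnegative and is simply dropped; and the coupling term $\dt\sum_n\int_\dom\varphi^n u^{n+1}$ is bounded directly by the $\xLinfty$-estimates (times $T|\dom|$), or by Young's inequality combined with the Poincar\'e inequality \eqref{poindis}. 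Since $u^0$ is controlled through \eqref{initialisation} by $u_0\in\xHone_0(\dom)$, this produces the bound on $\normLdHu{u}$. The $\xHdisc^{-1}$-bound on $\dtu$ then follows by duality: rewriting \eqref{pbdiscu} as $\dtu^n = \lap_{\mesh,\DIR}(u^{n+1})-(u^{n+1})^4+\varphi^n$, for any test function $v$ the pairing $\int_\dom\dtu^n\,v$ equals $-[u^{n+1},v]_\disc$ plus a term bounded in $\xLtwo$ by the $\xLinfty$-estimates, so dividing by $\normeundisc{v}$ and taking the supremum gives $\normemundisc{\dtu^n}\le\normeundisc{u^{n+1}}+C$; squaring and summing with weight $\dt$ controls $\normLdHmu{\dtu}$ by $\normLdHu{u}$ plus a constant.

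For $\varphi$ I multiply \eqref{pbdiscphi} by $|K|\,\varphi_K^{n+1}$ and sum over $K$, obtaining $\normedeu{\varphi^{n+1}}{\dom}^2+\snormeundisc{\varphi^{n+1}}^2 = \int_\dom(u^{n+1})^4\varphi^{n+1}$; the right-hand side is bounded by Young's inequality and the $\xLinfty$-estimates, which after absorbing the $\xLtwo$-term and summing with weight $\dt$ yields both $\normedeu{\varphi}{(0,T)\times\dom}$ and $\snormLdHu{\varphi}$ (the time level $n=0$ being covered by \eqref{initialisationphi}, and $n=N$ by the equation at $n=N-1$).

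The main obstacle is the $\xLtwo$-bound on $\dtphi$, which cannot be read off $\dtu$ naively since $\dtu$ is controlled only in $\xHdisc^{-1}$. I difference \eqref{pbdiscphi} between consecutive time levels and divide by $\dt$, giving $\dtphi^n-\lap_{\mesh,\NEU}(\dtphi^n)=P^n\,\dtu^n$, where, by the factorisation used in the uniqueness proof, $P_K^n = (u_K^{n+1})^3+(u_K^{n+1})^2u_K^n+u_K^{n+1}(u_K^n)^2+(u_K^n)^3$ is bounded by $4\,\norm{u_0}{\xLinfty(\dom)}^3$. Testing this relation with $\dtphi^n$ gives $\normedeu{\dtphi^n}{\dom}^2+\snormeundisc{\dtphi^n}^2 = \int_\dom\dtu^n\,(P^n\dtphi^n)$, and here one must spend the $\xHdisc^{-1}$-bound on $\dtu^n$: bounding the pairing by $\normemundisc{\dtu^n}\,\normeundisc{P^n\dtphi^n}$ reduces everything to a discrete product (Leibniz) estimate for $\normeundisc{P^n\dtphi^n}$. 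The good part of that product is $\norm{P^n}{\xLinfty(\dom)}$ times the discrete energy of $\dtphi^n$, which is absorbed into the left-hand side after a Young step and summed into $\normLdHmu{\dtu}^2$; the delicate part is the remainder coupling the face-differences of $P^n$ — hence of $u^{n+1}$ and $u^n$, controlled only in the discrete $\xHone$-seminorm — to the nodal and boundary values of $\dtphi^n$. Controlling this remainder uniformly with respect to the mesh size, with no $h^{-1}$ or $\dt^{-1}$ blow-up, is exactly where discrete Sobolev and trace inequalities enter, and where the dependence of $c_{\rm e}$ on the regularity parameter $\theta_\mesh$ (as a decreasing function) originates. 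Once this remainder is controlled and summed with weight $\dt$, collecting the four previous bounds yields \eqref{energy_bound}.
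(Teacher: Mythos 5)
Your treatment of the first four terms in \eqref{energy_bound} is correct and coincides with the paper's proof: testing \eqref{pbdiscu} with $u^{n+1}$, dropping the nonnegative quintic term and using the $\xLinfty$-bounds for the coupling term gives $\normLdHu{u}$; the duality argument gives $\normLdHmu{\dtu}$; and testing \eqref{pbdiscphi} with $\varphi^n$ gives the two bounds on $\varphi$. The gap is in the last term. You assert that ``$\dtu$ is controlled only in $\xHdisc^{-1}$'' and therefore set up the pairing $\int_\dom \dtu^n\,(P^n\dtphi^n)\dx \le \normemundisc{\dtu^n}\,\normeundisc{P^n\dtphi^n}$, which forces you into a discrete Leibniz estimate for $\normeundisc{P^n\dtphi^n}$. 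You do not complete that estimate, and it is doubtful it can be closed: the face differences of $P^n$ are controlled only through $\snormLdHu{u}$, i.e.\ in a time-integrated $\xLtwo$ sense, while they must be paired against nodal values of $\dtphi^n$ for which no uniform pointwise control is available; moreover $\normeundisc{\cdot}$ carries boundary contributions $\frac{|\edge|}{d_\edge}(P^n_K\dtphi^n_K)^2$ on exterior faces, and $\dtphi^n$ satisfies no homogeneous Dirichlet condition, so these terms are not absorbed by the Neumann energy $\snormeundisc{\dtphi^n}^2$ appearing on the left-hand side.

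The missing idea is that the premise is false: the scheme yields a genuine $\xLtwo((0,T)\times\dom)$ bound on $\dtu$, not merely an $\xHdisc^{-1}$ bound. Multiply \eqref{pbdiscu} by $|K|\,\dtu^n_K$ and sum over $K$: the diffusion term gives $\frac{1}{\dt}[u^{n+1},u^{n+1}-u^n]_\disc \ge \frac{1}{2\dt}\bigl(\normeundisc{u^{n+1}}^2-\normeundisc{u^n}^2\bigr)$, the term $|u^{n+1}|(u^{n+1})^3-\varphi^n$ is bounded in $\xLinfty$ by $\bar u_0^4$, and Cauchy--Schwarz plus Young give
\[
\normedeu{\dtu^n}{\dom}^2 + \frac{1}{\dt}\left( \normeundisc{u^{n+1}}^2 - \normeundisc{u^n}^2\right)
\leq |\dom|\, \bar u_0^8 .
\]
Summing in time telescopes the second term, and the initial contribution $\normeundisc{u^0}^2$ is bounded precisely because $u_0\in\xHone_0(\dom)$ (this is where the dependence on $\theta_\mesh$ enters, through the bound $\normeundisc{u^0}\le c\,\|u_0\|_{\xHone(\dom)}$ — not through any discrete Sobolev or trace inequality as you suggest). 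With $\normedeu{\dtu}{(0,T)\times\dom}$ in hand, your differenced equation $\dtphi^n-\lap_{\mesh,\NEU}(\dtphi^n)=P^n\dtu^n$ tested with $\dtphi^n$ closes by plain Cauchy--Schwarz in $\xLtwo$: the right-hand side is bounded by $4\bar u_0^3\,\normedeu{\dtu^n}{\dom}\,\normedeu{\dtphi^n}{\dom}$, and Young's inequality plus summation in time finishes the proof. No product rule is needed.
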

\begin{proof}
First, we recall that, by a standard reordering of the summations, we have, for any function $u \in \xHdisc(\dom)$:
\begin{align*}
 -\sum_{K\in\mesh} |K| \ u_K\ (\lap_{\mesh,\DIR}(u))_K=\normeundisc{u}^2,\\
-\sum_{K\in\mesh} |K| \ u_K\ (\lap_{\mesh,\NEU}(u))_K=\snormeundisc{u}^2.
\end{align*}
Multiplying Equation \eqref{pbdiscu} by $2\,\dt \, |K|\, u_K^{n+1}$, using the equality $2\,a\,(a-b)=a^2 +(a-b)^2-b^2$, summing over each control volume of the mesh and using the first of the preceding identities yields, for $0 \leq n \leq N-1$:
\[
\begin{array}{l} \displaystyle
\normedeu{u^{n+1}}{\dom}^2+\normedeu{u^{n+1}-u^n}{\dom}^2-\normedeu{u^n}{\dom}^2 + 2\,\dt\, \normeundisc{u^{n+1}}^2
\hspace{10ex}\\[2ex] \displaystyle \hfill
+ 2\,\dt\,\int_\dom (u^{n+1})^5 \dx = 2\,\dt\,\int_\dom \varphi^n u^{n+1} \dx.
\end{array}
\]
By Proposition \ref{linfty_est}, $u^{n+1}$ is non-negative and $\varphi^n_K\leq \bar u_0^4,\ \forall K \in \mesh$, where $\bar u_0$ stands for $\max_{K\in\mesh} u_K^0$ and is thus bounded by the $\xLinfty$-norm of on the initial data $u_0$.
Therefore, we get:
\[
\normedeu{u^{n+1}}{\dom}^2-\normedeu{u^n}{\dom}^2 + 2\, \dt\, \normeundisc{u^{n+1}}^2
\leq 2\,\dt\, \bar u_0^4\ \int_\dom u^{n+1} \dx.
\]
By Cauchy-Schwarz' inequality, Young's inequality and the discrete Poincar\'e inequality \eqref{poindis}, we thus obtain:
\[
\normedeu{u^{n+1}}{\dom}^2-\normedeu{u^n}{\dom}^2 + \dt\, \normeundisc{u^{n+1}}^2
\leq \dt \, |\dom|\, \diam(\dom)^2\, \bar u_0^8.
\]
Summing from $n=0$ to $n=N-1$, we get:
\[
\normedeu{u^N}{\dom}^2 + \sum_{n=1}^N \dt \, \normeundisc{u^n}^2 \leq T \, |\dom|\, \diam(\dom)^2\, \bar u_0^8 
+ \normedeu{u^0}{\dom}^2.
\]
Since, by assumption, $u_0 \in \xHone_0(\dom)$, the discrete $\xHone$ norm of $u^0$, $\normeundisc{u^0}$ is bounded by $c \| u_0\|_{\xHone(\dom)}$ where the real number $c$ only depends on $\dom$ and, in a decreasing way, on the parameter $\theta_\mesh$ characterizing the regularity of the mesh (see e.g. Lemma 3.3 in \cite{eym-09-conv}).
Together with the preceding relation, this provides the control of the first term in \eqref{energy_bound}.

\medskip
We now turn to the estimate of $\normLdHmu{\dtu}$.
Let $v$ be a function of $\xHdisc(\dom)$; multiplying  \eqref{pbdiscu} by $|K|\,v_K $ and summing over $K \in \mesh$, we get for $0 \leq n \leq N-1$:
\[
\int_\dom \dtu^n \ v \dx= -[u^{n+1},v]_\disc - \int_\dom \left[ (u^{n+1})^4-\varphi^n \right] \, v \dx.
\]
Both $(u^{n+1})^4$ and $\varphi^n$ are non-negative functions which are bounded by $\bar u_0^4$, aand therefore the difference $(u^{n+1})^4-\varphi^n$ is itself bounded by $\bar u_0^4$; using the Cauchy-Schwarz inequality and the discrete Poincar\'e inequality \eqref{poindis}, we  thus get:
\[
\int_\dom \dtu^n \ v \dx
\leq \left[ \normeundisc{u^{n+1}} + \bar u_0^4 \, |\dom|^{1/2} \, \diam(\dom)\right] \normeundisc{v},
\]
and so:
\[
\normemundisc{\dtu^n} \leq \normeundisc{u^{n+1}} + \bar u_0^4 \, |\dom|^{1/2} \, \diam(\dom),
\]
which, by the bound of $\normLdHu{u}$, yields the control of the second term in \eqref{energy_bound}.

\medskip
As far as $\varphi$ is concerned, the second equation  \eqref{pbdiscphi} of the scheme and the initialization \eqref{initialisation} yield for $0 \leq n \leq N$:
\[
\normedeu{\varphi^n}{\dom}^2 + \snormeundisc{\varphi^n}^2  \leq \int_\dom \bar u_0^4 \ \varphi^n \dx.
\]
Thus, by Young's inequality, we get:
\[
\frac 1 2 \ \normedeu{\varphi^n}{\dom}^2 + \snormeundisc{\varphi^n}^2 \leq \frac 1 2 \, |\dom| \, \bar u_0^8.
\]
Multiplying by $\dt$ and summing over the time steps, this provides the estimates of $\normedeu{\varphi}{(0,T)\times \dom}$ and $\snormLdHu{\varphi}$ we are seeking.

\medskip
To obtain a control on $\dtphi$, we need a sharper estimate on $\dtu$.
Our starting point is once again Equation \eqref{pbdisc}-$(i)$, which we multiply this time by $|K|\ \dtu^n$ before summing over $K\in\mesh$, to get for $0 \leq n \leq N-1$, once again by the identity $a^2 -b^2 \leq a^2 +(a-b)^2-b^2=2\,a\,(a-b)$ and invoking the Cauchy-Schwarz inequality:
\[
\normedeu{\dtu^n}{\dom}^2 + \frac{1}{2 \, \dt}\ \left( \normeundisc{u^{n+1}}^2 - \normeundisc{u^n}^2\right)
\leq |\dom|^{1/2}\, \bar u_0^4 \ \normedeu{\dtu^n}{\dom},
\]
so, by Young's inequality:
\[
\normedeu{\dtu^n}{\dom}^2 + \frac{1}{\dt}\ \left( \normeundisc{u^{n+1}}^2 - \normeundisc{u^n}^2\right)
\leq |\dom|\, \bar u_0^8.
\]
Multiplying by $\dt$ and summing over the time steps yields:
\begin{equation}
\normedeu{\dtu}{(0,T) \times \dom}^2 + \normeundisc{u^N}^2 \leq |\dom|\, T \, \bar u_0^8 + \normeundisc{u^0}^2,
\label{eq:est_dtu}\end{equation}
which provides an estimate for $\normedeu{\dtu}{(0,T) \times \dom}$.
Taking now the difference of the second equation \eqref{pbdiscphi} of the scheme at two consecutive time steps and using \eqref{initialisation} for the first step, we obtain for $0 \leq n \leq N-1$:
\[
\begin{array}{ll}
\forall K \in \mesh, \qquad
& \displaystyle
\dtphi_K^n - (\lap_{\mesh,\NEU}(\dtphi^n))_K=
\frac {(u_K^{n+1})^4-(u_K^n)^4} \dt
\\[2ex] & \displaystyle \hspace{7ex}
=\left[(u_K^{n+1})^3 + (u_K^{n+1})^2\,u_K^n +u_K^{n+1}\, (u_K^n)^2 +(u_K^n)^3 \right] \dtu_K^n.
\end{array}
\]
Multiplying by $|K|\ \dtphi_K^n$ over each control volume of the mesh and summing yields:
\[
\normedeu{\dtphi^n}{\dom}^2 + \snormeundisc{\dtphi^n}^2 \leq 4 \bar u_0^3\  \normedeu{\dtu^n}{\dom}\ \normedeu{\dtphi^n}{\dom},
\]
which, using Young's inequality, multiplying by $\dt$ and summing over the time steps yields the desired estimate for $\normedeu{\dtphi}{(0,T)\times \dom}$, thanks to \eqref{eq:est_dtu}.
\end{proof}

\bigskip
Let us now prove that sequences of solutions of the scheme  \eqref{pbdisc} converge, as the space and time steps tend to 0 (up to a subsequence),  to a weak solution of Problem \eqref{pbcont}, thereby proving the existence of this weak solution.

\begin{theorem}[Convergence of the finite volume scheme]
Let $(u^{(m)})_{m \in \xN}$ and $(\varphi^{(m)})_{m \in \xN}$ be a sequence of solutions to \eqref{pbdisc} with a sequence of discretizations such that the space and time step, $h^{(m)}$ and $\dt^{(m)}$ respectively, tends to zero.
We suppose that the parameters $\theta_{\mesh^{(m)}}$ characterizing the regularity of the meshes of this sequence are bounded away from zero, \ie\ $\theta_{\mesh^{(m)}} \geq \theta >0,\ \forall\, m \in \xN$.
Then there exists a subsequence, still denoted by $(u^{(m)})_{m \in \xN}$ and $(\varphi^{(m)})_{m \in \xN}$ and two functions $\tilde u$ and $\tilde \varphi$ such that:
\begin{enumerate}
\item $u^{(m)}$ and $\varphi^{(m)}$ tends to $\tilde u$ and $\tilde \varphi$ respectively in $\xLtwo((0,T)\times \dom)$,
\item $\tilde u$ and $\tilde \varphi$ are solution to the continuous problem \eqref{pbcont} in the following weak sense:
\end{enumerate}
\begin{subequations}
\begin{align}
& \tilde u \in \xLinfty((0,T)\times \dom) \cap \xLtwo(0,T ;\, \xHone_0(\dom)),
\ \tilde \varphi \in \xLinfty((0,T)\times \dom) \cap \xLtwo(0,T ;\, \xHone(\dom))
\nonumber \\
& \int_{0,T} \int_\dom \left[ -\frac{\partial \psi}{\partial t}\, \tilde u + \grad \tilde u \cdot \grad \psi 
+ (\tilde u^4-\tilde \varphi)\, \psi \right]\dx \ddt =
 \int_\dom \psi(x,0) \, u_0(x) \dx, \forall \psi \in \xCinfty_c([0,T)\times \dom), \label{pbweakfirst}\\
& \int_{0,T} \int_\dom \left[(\tilde \varphi- \tilde u^4) \, \psi + \grad \tilde \varphi \cdot \grad \psi \right] \dx \ddt =0, \forall \psi \in \xCinfty_c([0,T)\times \bar \dom).  \label{pbweaksecond}
 \end{align}
\label{pbweak}\end{subequations}
\end{theorem}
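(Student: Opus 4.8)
The plan is to follow the standard finite-volume convergence route: extract a strongly $\xLtwo$-convergent subsequence by a Kolmogorov compactness argument, identify the regularity of the limits through reconstructed discrete gradients, and then pass to the limit term by term in the two discrete equations tested against smooth functions. Throughout I fix $m$ and abbreviate $u=u^{(m)}$, $\varphi=\varphi^{(m)}$ and $\dt=\dt^{(m)}$, keeping track only of the fact that all constants depend on $\dom$, $u_0$ and (decreasingly) on the uniform regularity bound $\theta$, hence are independent of $m$. The two inputs are the uniform $\xLinfty$ bounds $0\le u\le\bar u_0$ and $0\le\varphi\le\bar u_0^4$ of Proposition \ref{linfty_est} (with $\bar u_0=\|u_0\|_{\xLinfty(\dom)}$) and the uniform energy bound \eqref{energy_bound} of Proposition \ref{prop:energy_bound}. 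Using the classical translate estimate for functions of finite discrete $\xHone$ norm (see \cite{eym-00-fin}), the $\normLdHu{u}$ part of \eqref{energy_bound} controls $\int_0^T\normedeu{u(\cdot+\eta,t)-u(\cdot,t)}{\dom}^2\ddt$ by a quantity of the form $C\,|\eta|\,(|\eta|+h^{(m)})$ uniformly in $m$, and the $\snormLdHu{\varphi}$ part does the same for $\varphi$; the constant stays bounded because $\theta_{\mesh^{(m)}}\ge\theta>0$. The time translates of $\varphi$ are equally immediate: the $\xLtwo$ control $\normedeu{\dtphi}{(0,T)\times\dom}\le c_{\rm e}$ gives, by Cauchy--Schwarz on the telescoping sum of discrete increments, $\int_0^{T-\tau}\normedeu{\varphi(\cdot,t+\tau)-\varphi(\cdot,t)}{\dom}^2\ddt\le C\,(\tau+\dt)$.

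The time translates of $u$ are the crux, since only the weaker discrete $\xLtwo(0,T;\xHdisc^{-1})$ bound $\normLdHmu{\dtu}$ is available, not an $\xLtwo$ bound. Here I would deploy the generic duality argument announced in the abstract: for $t\in(0,T-\tau)$ write $u(\cdot,t+\tau)-u(\cdot,t)$ as a $\dt$-weighted sum of the increments $\dtu^k$ over the time steps lying between $t$ and $t+\tau$, pair this expression in $\xLtwo(\dom)$ against $u(\cdot,t+\tau)-u(\cdot,t)$ itself, bound each pairing by $\normemundisc{\dtu^k}\,\normeundisc{u(\cdot,t+\tau)-u(\cdot,t)}$ through the very definition of $\normemundisc{\cdot}$, and finally integrate over $t$ and apply Cauchy--Schwarz in the discrete time sum, using that the total measure of the relevant time sets is controlled by $\tau+\dt$. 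This yields
\[
\int_0^{T-\tau} \normedeu{u(\cdot,t+\tau)-u(\cdot,t)}{\dom}^2\ddt
\le C\,\sqrt{\tau\,(\tau+\dt)}\ \normLdHmu{\dtu}\ \normLdHu{u},
\]
an expression that vanishes as $\tau\to0$ uniformly in $m$. Combining the space- and time-translate estimates, the Kolmogorov theorem provides a subsequence along which $u^{(m)}\to\tilde u$ and $\varphi^{(m)}\to\tilde\varphi$ strongly in $\xLtwo((0,T)\times\dom)$, which is assertion (1); passing the $\xLinfty$ bounds to the limit gives $\tilde u,\tilde\varphi\in\xLinfty((0,T)\times\dom)$ with $\tilde u\ge0$.

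To recover the announced Sobolev regularity, I would introduce the piecewise-constant discrete gradients associated with the two-point fluxes, whose $\xLtwo((0,T)\times\dom)^d$ norms are controlled by $\normLdHu{u}$ and $\snormLdHu{\varphi}$; up to a further subsequence they converge weakly, and the standard identification argument (see \cite{eym-09-conv,eym-00-fin}), together with the strong $\xLtwo$ convergence of $u$ and $\varphi$, shows that the weak limits are $\grad\tilde u$ and $\grad\tilde\varphi$, with the Dirichlet trace forced by the external-face contributions of $[\cdot,\cdot]_\disc$; thus $\tilde u\in\xLtwo(0,T;\xHone_0(\dom))$ and $\tilde\varphi\in\xLtwo(0,T;\xHone(\dom))$.

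Finally I would pass to the limit in the weak formulations. For \eqref{pbweakfirst}, multiply \eqref{pbdiscu} by $\dt\,|K|\,\psi(x_K,t^n)$ and sum over $K\in\mesh$ and $n$. An Abel summation in time, together with the initialization \eqref{initialisation} and $\psi(\cdot,T)=0$, turns the discrete-derivative term into $-\int_0^T\int_\dom\partial_t\psi\,\tilde u$ plus the right-hand side $\int_\dom\psi(\cdot,0)\,u_0$; the diffusion term, rewritten through the symmetric duality identity $-\sum_K|K|\,\psi(x_K,t^n)\,(\lap_{\mesh,\DIR}(u^{n+1}))_K=[u^{n+1},\psi^n]_\disc$ (with $\psi^n$ the discrete interpolate of $\psi(\cdot,t^n)$) and transferred onto the smooth, compactly supported $\psi$, converges to $\int_0^T\int_\dom\grad\tilde u\cdot\grad\psi$ by consistency of $\lap_{\mesh,\DIR}\psi^n$ and strong convergence of $u$; the source term converges by strong convergence of $\varphi$; and the nonlinearity $|u^{n+1}|\,(u^{n+1})^3=(u^{n+1})^4$ converges to $\tilde u^4$ because $|a^4-b^4|\le 4\,\bar u_0^3\,|a-b|$ combined with strong $\xLtwo$ convergence controls $\|(u^{(m)})^4-\tilde u^4\|_{\xLtwo}$. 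The identical scheme of argument applied to \eqref{pbdiscphi}, tested with $\dt\,|K|\,\psi(x_K,t^{n+1})$, produces \eqref{pbweaksecond}; here no time derivative is present, and since $\lap_{\mesh,\NEU}$ carries no external-face flux the Neumann condition is automatically encoded and $\psi$ may be nonzero on $\partial\dom$. I expect the main obstacle to be the time-translate estimate of the second paragraph, where the mismatch between the $\xHdisc^{-1}$ control of $\dtu$ and the $\xHdisc^1$ control of $u$ must be reconciled by the duality argument.
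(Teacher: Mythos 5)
Your proposal is correct and follows the same overall strategy as the paper: uniform $\xLinfty$ and energy bounds, space- and time-translate estimates, Kolmogorov compactness, then term-by-term passage to the limit in the tested discrete equations. Your treatment of the time translates of $u$ --- writing $u(\cdot,t+\tau)-u(\cdot,t)$ as a $\dt$-weighted sum of the $\dtu^k$, pairing it against itself, and splitting the pairing via the duality between $\normemundisc{\cdot}$ and $\normeundisc{\cdot}$ --- is exactly the mechanism of the paper's Theorem \ref{th:time_trans}, which you have in effect re-derived inline; the paper packages it as an abstract statement for a general dual pair $\normes{\cdot}$, $\normems{\cdot}$ (applied with $\normeundisc{\cdot}$ for $u$ and with the self-dual $\xLtwo$ norm for $\varphi$, which recovers your telescoping argument for $\varphi$), and uses Young's inequality rather than Cauchy--Schwarz, yielding a bound linear in $\tau$ instead of your $\sqrt{\tau(\tau+\dt)}$; either form suffices for Kolmogorov. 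Two points where you genuinely diverge or gloss: (i) for the $\xHone$ regularity of the limits you propose reconstructed discrete gradients converging weakly to $\grad\tilde u$ and $\grad\tilde\varphi$; this works but needs the nontrivial fact that a consistent gradient reconstruction exists for the two-point scheme, whereas the paper avoids it either by the space-translate characterization of $\xHone$ (\cite{eym-00-fin}, Theorems 14.2--14.3) or by its appendix Theorem \ref{theo-regulimit} (a Fatou argument on $B$-limit-included spaces), which is more elementary. (ii) You dismiss the Neumann diffusion term as automatic, but passing to the limit in $\lap_{\mesh,\NEU}$ tested against a $\psi$ that does not vanish on $\partial\dom$ is precisely the point the paper flags as an additional difficulty, resolved by Theorem 10.3 of \cite{eym-00-fin}; similarly, the space translates of $\varphi$ require the $\xLinfty$ bound to absorb the boundary jump of the extension by zero, and the time translates near $t=0$ and $t=T$ contribute the $\xLinfty(0,T;\xLtwo)$ terms appearing in Theorem \ref{th:time_trans}. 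None of these is a gap in the argument, only in its write-up.
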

\begin{proof}
The following estimates of the space translates can be found in \cite{eym-00-fin}, Lemma 9.3, p. 770 and Lemma 18.3, p. 851:
\[
\begin{array}{l}
\forall v \in \xHdisc,\ \forall \eta \in \xR^d,
\\[2ex]\displaystyle \hspace{10ex}
\normedeu{\hat v(\cdot + \eta) - \hat v }{\xR^d}^2 \leq
\normeundisc{v}^2\ |\eta| \ \left[|\eta| + c(\dom)\, h \right],
\\[2ex]\displaystyle \hspace{10ex}
\normedeu{\hat v(\cdot + \eta) - \hat v }{\xR^d}^2 \leq
|\eta| \ \left[ \snormeundisc{v}^2\ (|\eta| + 2\, h) + 2\,|\partial \dom| \, \Vert v \Vert_{\xLinfty(\dom)} \right],
\end{array}
\]
where $\hat v$ stands for the extension by zero of $v$ to $\xR^d$ and the real number $c(\dom)$ only depends on the domain.

\medskip
For $m$ given, let $\hat u^{(m)}$ and $\hat \varphi^{(m)}$ be the functions of $\xLinfty(\xR^d \times \xR)$ obtained by extending  $u^{(m)}$ and $\varphi^{(m)}$ by $0$ from $\dom \times [0,T)$ to $\xR^d \times \xR$.
The estimates of Proposition \ref{prop:energy_bound} of $\normLdHu{u^{(m)}}$ and $\snormLdHu{\varphi^{(m)}}$, together with the $\xLinfty$-bound for $\varphi^{(m)}$, thus allow to bound independently of $m$ the space translates of $\hat u^{(m)}$ and $\hat \varphi^{(m)}$ in the $\xLtwo((0,T)\times\dom)$-norm.
In addition, Theorem \ref{th:time_trans} applied with $\normes{\cdot}$ equal to $\normeundisc{\cdot}$ for $u^{(m)}$ and with $\normes{\cdot}$ equal to $\Vert \cdot \Vert_{\xLtwo(\dom)}$ for $\varphi^{(m)}$, together with the estimates of $\normLdHu{u^{(m)}}$, $\normLdHmu{\dtu^{(m)}}$, $\normedeu{\varphi}{(0,T)\times \dom}$ and $\normedeu{\dtphi}{(0,T)\times \dom}$ of Proposition \ref{prop:energy_bound} allows to bound the time translates of $\hat u^{(m)}$ and $\hat \varphi^{(m)}$, still independently of $m$ and in the $\xLtwo((0,T)\times\dom)$-norm.
Furthermore, by Proposition \ref{linfty_est}, the sequences $(\hat u^{(m)})_{m \in \xN}$ and $(\hat \varphi^{(m)})_{m \in \xN}$ are uniformly bounded in $\xLinfty((0,T)\times\dom)$, and thus in $\xLtwo((0,T)\times\dom)$.
By Kolmogorov theorem (see e.g. \cite{eym-00-fin}, Theorem 14.1, p. 833), these sequences are relatively compact and strongly converge in $\xLtwo((0,T)\times\dom)$ to, respectively, $\tilde u$ and $\tilde \varphi$.

\medskip

The regularity of the limits, {\it i.e.} the fact that $\tilde u$ (resp. $\tilde \varphi$) lies  in $\xLtwo(0,T ;\, \xHone_0(\dom))$  (resp. $\xLtwo(0,T ;\, \xHone(\dom))$ can be proven by  invoking  \cite[Theorem 14.2 and Theorem 14.3]{eym-00-fin} thanks to  the uniform bounds on $\normLdHu{u^{(m)}}$ and $\snormLdHu{\varphi^{(m)}}$ given in Proposition \ref{prop:energy_bound}, similarly to  the technique of  \cite{eym-07-conv}; it can also be proven (again thanks to the estimates of Proposition \ref{prop:energy_bound}) by invoking  Theorem \ref{theo-regulimit} given in the appendix  below, which gives a more straightforward way to obtain the regularity in a more general setting. 
The spaces which are considered in the application of  Theorem \ref{theo-regulimit} are $B= L^2(\Omega)$, $X_m = H_{\mesh^{(m)}}$ and $X= H^1_0(\Omega)$.

\medskip
To prove that $\tilde u$ and $\tilde \varphi$ are solution to the continuous problem, it remains to prove that \eqref{pbweak} holds.
This proof is rather standard (see e.g. the proof of Theorem 18.1 pp. 858--862 in \cite{eym-00-fin} for a similar, although more complicated, problem) and we only give here the main arguments.
Let $\psi$ be a function of $\xCinfty_c([0,T)\times \dom)$.
We define $\psi_K^n$ by $\psi_K^n=\psi(x_K,t^n)$.
Multiplying the  \eqref{pbdiscu} by $\dt \, |K|\ \psi_K^{n+1}$ and summing up over the control volumes and the time steps, we get for any element of the sequence of discrete solutions:
\[
\begin{array}{r}\displaystyle
\sum_{n=0}^{N-1} \sum_{K\in\mesh} \dt \, |K|\ \psi_K^{n+1}
\left[\frac{u_K^{n+1} -u_K^n}{\dt} - (\lap_{\mesh,\DIR} (u^{n+1}))_K + (u_K^{n+1})^4 - \varphi_K^n\right]
\hspace{7ex}\\ \displaystyle
=T_1 +T_2 + T_3 =0,
\end{array}
\]
where, for enhanced readability, the superscript $^{(m)}$ has been omitted and:
\[
\left| \quad \begin{array}{l} \displaystyle
T_1= \sum_{n=0}^{N-1} \sum_{K\in\mesh} |K|\ \psi_K^{n+1}\ \left[ u_K^{n+1} -u_K^n \right],
\\[3ex] \displaystyle
T_2=\sum_{n=0}^{N-1} \sum_{K\in\mesh} - \dt \, |K|\ \psi_K^{n+1}\ (\lap_{\mesh,\DIR} (u^{n+1}))_K,
\\[3ex] \displaystyle
T_3=\sum_{n=0}^{N-1} \sum_{K\in\mesh} \dt \, |K|\ \psi_K^{n+1}\ \left[ (u_K^{n+1})^4 - \varphi_K^n \right].
\end{array} \right.
\]
Reordering the summations and using the fact that $\psi(\cdot,T)=0$, we get for $T_1$:
\[
T_1=-\sum_{K\in\mesh} |K|\ \psi_K^1\ u_K^0 + \sum_{n=1}^{N-1} \sum_{K\in\mesh} |K|\ u_K^n\ \left[ \psi_K^n -\psi_K^{n+1} \right].
\]
The first term of the right hand side reads:
\[
\begin{array}{ll} \displaystyle
T_{1,1}=-\int_{\dom} u_{0}(x)\, \psi(x,0)\dx
& \displaystyle
+ \sum_{K\in\mesh} \int_K (u_0(x)-u^0_K) \, \psi(x,0) \dx
\\  & \displaystyle
+\sum_{K\in\mesh} |K|\ u_K^0 \ \underbrace{\left[ \frac{1}{|K|} \int_K \psi(x,0)\dx - \psi(x_K,\dt) \right]}_{\displaystyle R_\psi}.
\end{array}
\]
On one hand, $u^0$ converges to $u_0$ in $\xLn{1}{(\dom)}$ and $\psi(\cdot, 0) \in \xCinfty_c(\dom)$, so the second term of $T_{1,1}$ tends to zero with $h$; on the other hand, since $u_K^0 \leq \bar u_0 $, $\forall K \in \mesh$ and, from the regularity of $\psi$, $|R_\psi|\leq c_\psi \ (\dt +h)$, the third term of $T_{1,1}$ also tends to zero with $\dt$ and $h$.
Let us now turn to the second term in the expression of $T_1$:
\[
\begin{array}{l}\displaystyle
T_{1,2}
=\sum_{n=1}^{N-1} \sum_{K\in\mesh} |K|\ u_K^n\ \left[ \psi_K^n -\psi_K^{n+1} \right]
=-\int_{\dt}^T \int_\dom u(x,t)\ \frac{\partial \psi}{\partial t}(x,t) \dx \ddt
\hspace{5ex}\\ \hfill \displaystyle
+\sum_{n=1}^{N-1} \sum_{K\in\mesh} \dt\, |K|\ u_K^n\ (R_\psi)_K^n
\end{array}
\]
with:
\[
(R_\psi)_K^n= \frac 1 {\dt\, |K|}\int_{t^n}^{t^{n+1}} \int_K \frac{\partial \psi}{\partial t}(x,t) \dx \ddt 
         - \frac{\psi(x_K,t^{n+1})-\psi(x_K,t^n)}{\dt},
\]
and thus $|(R_\psi)_K^n|\leq c_\psi \ (\dt +h)$.
Since $u_K^n \leq \bar u_0$, $\forall K \in \mesh$ and $0 \leq n \leq N$, we get:
\[
T_1 \to - \int_{\dom} u_{0}(x)\, \psi(x,0)\dx - \int_0^T \int_\dom \tilde u(x,t)\ \frac{\partial \psi}{\partial t}(x,t) \dx \ddt
\qquad \mbox{as} \quad m \to \infty.
\]
Reordering the summations in $T_2$ and using the fact that $\psi(\cdot,T)=0$, we obtain:
\[
\begin{array}{ll} \displaystyle
T_2
=\sum_{n=0}^{N-2} \sum_{K\in\mesh} - \dt \, |K|\ u_K^{n+1}\ (\lap_{\mesh,\DIR} (\psi^{n+1}))_K
\\ \displaystyle \quad
=-\int_{\dt}^T \int_\dom u(x,t)\ \lap \psi (x,t) \dx \ddt + \sum_{n=0}^{N-2} \sum_{K\in\mesh} - \dt \, |K|\ u_K^{n+1} \sum_{\edge \in \edges(K)}|\edge|\ (R_\psi)_\edge^{n+1},
\end{array}
\]
where the residual term $(R_\psi)_\edge^{n+1}$ is the difference of the mean value of $\grad \psi\cdot n$ over $\edge \times (t^{n+1},t^{n+2})$ and its finite volume approximation.
The fact that the second term in the right hand side of this relation tends to zero is thus a classical consequence of the control of $\normLdHu{u}$ and the consistency of the diffusive fluxes $(R_\psi)_\edge^{n+1}$ (see Theorem 9.1, pp. 772--776, in \cite{eym-00-fin}) and yields, as $\tilde u$ is known to belong to $\xHone_0(\dom)$:
\[
T_2 \to \int_0^T \int_\dom \grad \tilde u(x,t) \cdot \grad \psi(x,t) \dx \ddt
\qquad \mbox{as} \quad m \to \infty.
\]
Finally, $T_3$ reads:
\[
\begin{array}{l} \displaystyle
T_3=\int_{\dt}^T \int_\dom \psi(x,t)\ \left[ u(x,t)^4 - \varphi(x,t-\dt) \right]
\hspace{15ex}\\ \hfill \displaystyle
-\sum_{n=0}^{N-2} \sum_{K\in\mesh} \dt \, |K|\ \left[ (u_K^{n+1})^4 - \varphi_K^n \right]\ (R_\psi)_K^{n+1}
\end{array}
\]
with:
\[
(R_\psi)_K^{n+1}=\frac 1 {\dt \, |K|} \int_{t^{n+1}}^{t^{n+2}} \int_K \psi(x,t)\dx \ddt - \psi(x_K,t^{n+1}).
\]
The second term tends to zero by the $\xLinfty$-estimates for $u$ and $\varphi$ and the regularity of $\psi$.
Since $u^{(m)}$ tends to $\tilde u$ in $\xLtwo((0,T)\times \dom)$ and is bounded in $\xLinfty((0,T)\times \dom)$, $u^{(m)}$ converges to $\tilde u$ in $\xLtwo(0,T;\,\xLn{p}(\dom))$, for any $p \in [1,+\infty)$; in addition, from the time translates estimates, $\varphi^{(m)}(\cdot,\cdot-\dt) $ also converge to $\tilde \varphi$.
We thus get:
\[
T_3 \to \int_0^T \int_\dom \psi(x,t)\ \left[ \tilde u(x,t)^4 - \tilde \varphi(x,t) \right] \dx \ddt
\qquad \mbox{as} \quad m \to \infty.
\] 
Gathering the results for $T_1$, $T_2$ and $T_3$, we obtain \eqref{pbweakfirst}.
The  relation \eqref{pbweaksecond} is obtained using the same arguments; the convergence of the diffusion term in case of Neumann boundary conditions poses an additional difficulty which is solved in Theorem 10.3, pp. 810--815, in \cite{eym-00-fin}.
\end{proof}

\bigskip
The uniqueness of the solution to the problem under consideration is beyond the scope of the present paper.
Note however that such a result would imply, by a standard argument, the convergence of the whole sequence to the solution.
%
% -------------------------------------------------------------------------------------------------------------------------
%
\section{Conclusion}

We propose in this paper a finite volume scheme for a problem capturing the essential difficulties of a simple radiative transfer model (the so-called {\bf P}$_1$ model), which enjoys the following properties: the discrete solution exists, is unique, and satisfies a discrete maximum principle; in addition, it converges (possibly up to the extraction of a subsequence) to a solution of the continuous problem, which yields, as a by-product, that such a solution indeed exists.
For the proof of this latter result, we state and prove an abstract estimate allowing to bound the time translates of a finite volume discrete function, as a function of (possibly discrete) norms of the function itself and of its discrete time derivative; although this estimate is underlying in some already available analysis on finite volumes \cite[Chapter IV]{eym-00-fin}, \cite{eym-01-app,eym-07-conv} and  discontinous Galerkin approximations \cite{DipietroErn2010NavierStokes}, the present formulation is new and should be useful to tackle new problems.
Variants of the presented numerical scheme are now successfully running for the modelling of radiative transfer in the ISIS free software \cite{isis} developed at IRSN and devoted to the simulation of fires in confined buildings \cite{bab-05-ons}, and in particular nuclear power plants.
%
% -------------------------------------------------------------------------------------------------------------------------
%

\appendix
\section{Estimation of time translates}

The objective of this appendix is to state and prove an abstract result allowing to bound the time translates of a discrete solution.
\bigskip
We now introduce some notations.
Let $\xHdisc(\dom)$ and $\xHdiscd$ be the discrete functional spaces introduced in section \ref{sec:FV} and \ref{sec:conv} respectively.
We suppose given a norm $\normes{\cdot}$ on $\xHdisc(\dom)$, over which we also define the dual norm $\normems{\cdot}$ with respect to the $\xLtwo$-inner product:
\[
\forall u \in \xHdisc(\dom), \qquad
\normems{u} \eqdef \sup_{v \in \xHdisc(\dom),\, v \neq 0} \quad \frac{\displaystyle \int_\dom u\,v \dx}{\normes{v}}.
\]
These two spatial norms may be associated to a corresponding norm on $\xHdiscd$ as follows:
\[
\begin{array}{ll} \displaystyle
\forall u \in \xHdiscd,\ u=(u^n)_{0\leq n\leq N}, \qquad
& \displaystyle
\normLds{u}^2 = \sum_{n=0}^N \dt\ \normes{u^n}^2,
\\[3ex] \hfill
\mbox{and} \qquad
& \displaystyle
\normLdms{u}^2 = \sum_{n=0}^{N-1} \dt\ {\normems{u^n}}^2.
\end{array}
\]
 
%
% estimation des translations :
%
\begin{theorem}\label{th:time_trans}
Let $u$ be a function of $\xHdiscd$ and $\tau$ a real number.
We denote by $\hat u$ the extension by zero of $u$ to $\xR^d \times \xR$.
Then we have:
\[
\begin{array}{r}
\Vert \hat u(\cdot, \cdot +\tau) - \hat u(\cdot, \cdot)\Vert_{\xLtwo(\xR^d \times \xR)}^2
\leq \tau\ \left[ 2\ \normLds{u}^2 \right.
\hspace{25ex} \\[1.5ex] \displaystyle
\left. + \frac 1 2 \ \normLdms{\dtu}^2 +2\ \Vert u \Vert_{\xLinfty(0,T;\, \xLtwo(\dom))}^2 \right].
\end{array}
\]
\end{theorem}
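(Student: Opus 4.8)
The plan is to reduce the space–time translate to a sum over the time levels that the shift $\tau$ sweeps across, and then to pair the temporal increments of $u$ against the translate itself by means of the $\normems{\cdot}$–$\normes{\cdot}$ duality (for the interior increments) and the plain $\xLtwo(\dom)$ duality (for the two increments created by the extension by zero). Throughout I assume $\tau>0$; the case $\tau<0$ follows from the change of variables $t\mapsto t-\tau$ and $\tau=0$ is trivial. It is convenient to take $\tau$ commensurable with $\dt$, the general case being entirely analogous. Setting $\hat u^n\eqdef u^n$ for $0\le n\le N-1$ and $\hat u^n\eqdef 0$ otherwise, the step-function structure in time gives, for a.e.\ $t$,
\[
\hat u(x,t+\tau)-\hat u(x,t)=\sum_{k}\bigl(\hat u^{k}(x)-\hat u^{k-1}(x)\bigr)\,\phi_k(t),\qquad \phi_k(t)\eqdef\mathbf{1}_{\{t^{k}-\tau\le t<t^{k}\}},
\]
the nonzero increments being $\hat u^{0}-\hat u^{-1}=u^0$, $\hat u^{N}-\hat u^{N-1}=-u^{N-1}$, and $\hat u^{k}-\hat u^{k-1}=\dt\,\dtu^{k-1}$ for $1\le k\le N-1$.

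Squaring and expanding a single factor, with $w(\cdot,t)\eqdef\hat u(\cdot,t+\tau)-\hat u(\cdot,t)\in\xHdisc(\dom)$, I would write
\[
\Vert\hat u(\cdot,\cdot+\tau)-\hat u\Vert_{\xLtwo(\xR^d\times\xR)}^2=\sum_{k}\int_{\xR}\phi_k(t)\Bigl(\int_\dom\bigl(\hat u^{k}-\hat u^{k-1}\bigr)\,w(\cdot,t)\dx\Bigr)\ddt,
\]
and then treat the interior indices $1\le k\le N-1$ and the two boundary indices $k\in\{0,N\}$ separately. For the interior terms, the definition of $\normems{\cdot}$ followed by Young's inequality gives, pointwise in $t$,
\[
\dt\int_\dom\dtu^{k-1}\,w(\cdot,t)\dx\le\dt\,\normems{\dtu^{k-1}}\,\normes{w(\cdot,t)}\le\frac{\dt}2\,\normems{\dtu^{k-1}}^2+\frac{\dt}2\,\normes{w(\cdot,t)}^2 .
\]
Summing the first contribution over $k$ and using $\int_{\xR}\phi_k\,\ddt=\tau$ produces $\tfrac{\tau}{2}\normLdms{\dtu}^2$ (the missing index $k=N$ only drops a nonnegative term). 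Summing the second contribution, Fubini together with the counting identity $\sum_k\phi_k(t)=\tau/\dt$ (the number of time levels in $(t,t+\tau]$) turns the overlap of the supports into $\tfrac{\tau}{2}\int_{\xR}\normes{w(\cdot,t)}^2\ddt$, and then $\normes{w}^2\le 2\normes{\hat u(\cdot,t+\tau)}^2+2\normes{\hat u(\cdot,t)}^2$ with $\int_{\xR}\normes{\hat u(\cdot,t)}^2\ddt\le\normLds{u}^2$ yields $2\tau\normLds{u}^2$.

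For the two boundary increments I would not use the dual norm but the plain $\xLtwo(\dom)$ Cauchy–Schwarz inequality, since $u^0$ and $u^{N-1}$ are not discrete time derivatives. On $\operatorname{supp}\phi_0=[-\tau,0)$ one has $w(\cdot,t)=\hat u(\cdot,t+\tau)$ (a single time level), so $\int_\dom u^0\,w\dx\le\normedeu{u^0}{\dom}\,\normedeu{w(\cdot,t)}{\dom}\le\Vert u\Vert_{\xLinfty(0,T;\,\xLtwo(\dom))}^2$, and integrating over the support of measure $\tau$ gives $\tau\,\Vert u\Vert_{\xLinfty(0,T;\,\xLtwo(\dom))}^2$; the index $k=N$ is handled identically on $[T-\tau,T)$. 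Adding the interior and the two boundary contributions gives exactly $\tau\bigl[\,2\normLds{u}^2+\tfrac12\normLdms{\dtu}^2+2\Vert u\Vert_{\xLinfty(0,T;\,\xLtwo(\dom))}^2\bigr]$.

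The delicate point is the interior estimate. A naive Cauchy–Schwarz separating the increment index $k$ from the intermediate times $t$ overcounts by a factor of order $\tau/\dt$ — because each fixed $t$ lies in the supports of about $\tau/\dt$ of the $\phi_k$ — and so destroys the linear-in-$\tau$ scaling. The remedy is to apply Young's inequality \emph{pointwise in $t$ before summing in $k$}, and to exploit the exact counting identity $\sum_k\phi_k(t)=\tau/\dt$, which is what converts that overlap into precisely the right constant; taking the Young parameter equal to $1$ is what matches the prescribed coefficients $\tfrac12$ and $2$. When $\tau$ is not a multiple of $\dt$ the count is only bounded by $\lceil\tau/\dt\rceil$, which requires slightly more bookkeeping but changes nothing essential.
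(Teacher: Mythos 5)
Your proof follows essentially the same route as the paper's: the same telescoping decomposition of $\hat u(\cdot,\cdot+\tau)-\hat u$ along the characteristic functions $\chi^n_\tau$, the same asymmetric pairing of that expansion against the raw translate, duality between $\normems{\cdot}$ and $\normes{\cdot}$ plus Young's inequality for the interior increments, and a plain $\xLtwo(\dom)$ bound for the two increments created by the zero extension. The only point to tighten is the overlap count: the pointwise identity $\sum_k\chi^k_\tau(t)=\tau/\dt$ holds only when $\dt$ divides $\tau$, and in general the count is $\lceil\tau/\dt\rceil$, which degrades your constant from $\tau$ to $\tau+\dt$ (harmless for the compactness application, but not the stated bound); the paper recovers exactly $\tau$ for arbitrary $\tau$ by integrating the indicator sum over intervals of length $\dt$ (Lemma \ref{int_t}, inequality \eqref{inttii}), which is precisely the ``more bookkeeping'' your closing remark alludes to.
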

\begin{proof}
Let $u$ be a function of $\xHdiscd$ and $t \in \xR$.
Let $\tau$ be a real number that we suppose positive.
The following identity holds:
\[
\hat u(\cdot, t+\tau)-\hat u(\cdot, t) = \chi^0_\tau(t)\ u^0 + \sum_{n=1}^{N-1} \chi^n_\tau(t) \left[ u^n - u^{n-1} \right]
- \chi^N_\tau(t)\ u^{N-1}.
\]
For $s \in \xR$ we define $n(s)$ by: $n(s)=-1$ if $s <0$, $n(s)$ is the index such that $t^{n(s)} \leq s <  t^{n(s)+1}$ for $0\leq s < t^N$, $n(s)=N+1$ for $t \geq t^N$.
Let $n_0(t)$ and $n_1(t)$ be given by $n_0(t)=n(t)$, $n_1(t)=n(t+\tau)$.
We adopt the convention $u^{-1}=u^{N}=0$.
With this notation, we have for $u(\cdot, t+\tau)-u(\cdot, t)$ the following equivalent expression:
\[
u(\cdot, t+\tau)-u(\cdot, t) = u^{n_1(t)}-u^{n_0(t)},
\]
and thus:
\[
\begin{array}{l} \displaystyle
\int_\dom \left[ u(x,t+\tau)-u(x,t)\right]^2 \dx =
\\ \displaystyle \hspace{5ex}
\int_\dom \left[u^{n_1(t)}-u^{n_0(t)}\right] 
\ \left[ \chi^0_\tau(t)\ u^0 + \sum_{n=1}^{N-1} \chi^n_\tau(t) \left[ u^n - u^{n-1} \right]
- \chi^N_\tau(t)\ u^{N-1} \right] \dx.
\end{array}
\]
Developping, we get:
\[
\int_\dom \left[ u(x,t+\tau)-u(x,t)\right]^2 \dx = T_1(t) + T_2(t) + T_3(t)
\]
with:
\[
\left| \quad \begin{array}{l} \displaystyle
T_1(t) =\chi^0_\tau(t) \int_\dom \left[u^{n_1(t)}-u^{n_0(t)}\right] \ u^0 \dx,
\\ \displaystyle
T_2 (t) = \sum_{n=1}^{N-1} \chi^n_\tau(t) \int_\dom \left[u^{n_1(t)}-u^{n_0(t)}\right] \left[ u^n - u^{n-1} \right] \dx,
\\ \displaystyle
T_3 (t) = - \chi^N_\tau(t)\int_\dom \left[u^{n_1(t)}-u^{n_0(t)}\right]\ u^{N-1} \dx.
\end{array} \right.
\]
We first estimate the integral of $T_1(t)$ over $\xR$.
Since $\chi^0_\tau(t)$ is equal to $1$ in the interval $[-\tau,0)$ and $0$ elsewhere, and since $u^{n_0(t)}=0$ for any negative $t$, we get:
\[
\int_\xR T_1(t) \ddt = \int_{-\tau}^0 \int_\dom u^{n_1(t)} \ u^0 \dx \ddt \leq \tau\ \Vert u \Vert_{\xLinfty(0,T;\, \xLtwo(\dom))}^2.
\]
By the same arguments, we get the same bound for the integral of $T_3(t)$:
\[
\int_\xR T_3(t) \ddt \leq \tau\ \Vert u \Vert_{\xLinfty(0,T;\, \xLtwo(\dom))}^2.
\]
From the definition of the $\normems{\cdot}$ norm, we get:
\[
T_2(t)
\leq \dt \sum_{n=1}^{N-1} \chi^n_\tau(t)\quad \normems{\dtu^{n-1}}\  \normes{u^{n_1(t)}-u^{n_0(t)}},
\]
and thus, by Young's inequality:
\[
T_2(t)
\leq \dt \sum_{n=1}^{N-1} \chi^n_\tau(t)\quad \left[ \frac 1 2 \ {\normems{\dtu^{n-1}}}^2 
+ \normes{u^{n_0(t)}}^2 + \normes{u^{n_1(t)}}^2 \right].
\]
Integrating over the time, we get:
\[
\int_\xR T_2(t) \ddt\leq T_{2,1} + T_{2,2} + T_{3,3},
\]
with 
\begin{align*}
 &T_{2,1}=\frac \dt 2 \int_\xR \sum_{n=1}^{N-1} \chi^n_\tau(t)\quad {\normems{\dtu^{n-1}}}^2 \ddt, \\
&T_{2,2}=\dt \sum_{m=0}^{N-1} \left[ \int_{t^m}^{t^{m+1}} \sum_{n=1}^{N-1} \chi^n_\tau(t) \ddt \right] \normes{u^m}^2, \\
& T_{2,3}=\dt \sum_{m=0}^{N-1} \left[ \int_{t^m-\tau}^{t^{m+1}-\tau} \sum_{n=1}^{N-1} \chi^n_\tau(t) \ddt \right] \normes{u^m}^2.
\end{align*}
Thans to Relation $(i)$ of the technical lemma \ref{int_t} given below, we have
\[
\begin{array}{ll}\displaystyle
T_{2,1}=& \displaystyle
= \frac \tau 2 \sum_{n=0}^{N-2} \dt\ {\normems{\dtu^n}}^2 
\\[3ex] & \displaystyle \leq \frac \tau 2 \ \normLdms{\dtu}^2.
\end{array}
\]
Since $u^{n_0(t)}=u^m$ for $t^m \leq t < t^{m+1}$, and thanks to Relation \eqref{inttii} of  Lemma \ref{int_t} we get:
\[
T_{2,2}\leq \tau \sum_{m=0}^{N-1}\dt\ \normes{u^m}^2=\tau \ \normLds{u}^2.
\]
Finally, $u^{n_1(t)}=u^m$ for $t^m-\tau \leq t < t^{m+1}-\tau$, and thus, by the same argument:
\[
T_{2,3}
\leq \tau \sum_{m=0}^{N-1}\dt\ \normes{u^m}^2=\tau \ \normLds{u}^2.
\]
This concludes the proof for positive $\tau$.
The case of negative $\tau$ follows by symmetry.
\end{proof}

\begin{lemma}\label{int_t}
Let $(t^n)_{0 \leq n \leq N}$ be such that $t^0=0$, $t^n=n\dt$, $t^N=T$, $\tau$ be a positive real number and $\chi^n_\tau:\ \xR \rightarrow \xR$ be the function defined by $\chi^n_\tau (t)=1$ if $t < t^n \leq t+\tau$ and $\chi^n_\tau (t)=0 $ otherwise.
Then, for any family of real numbers $(\alpha_n)_{n=1,N}$ and, respectively, for any real number $t$, we have the following identities:
\begin{align}
\int_\xR \left[ \sum_{n=1}^N \alpha_n \chi^n_\tau(t) \right] \ddt= \tau \sum_{n=1}^N \alpha_n,\label{intti} \\ 
\int_t ^{t+\dt}\left[ \sum_{n=1}^N \chi^n_\tau(s) \right] \ds \leq \tau.\label{inttii}
\end{align}
 
\end{lemma}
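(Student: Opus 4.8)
The plan is to treat the two identities separately, observing that \eqref{intti} is elementary while the real content lies in \eqref{inttii}. The starting point for both is the remark that the condition $t<t^n\le t+\tau$ defining $\chi^n_\tau$ is equivalent to $t^n-\tau\le t<t^n$; hence $\chi^n_\tau$ is exactly the indicator of the half-open interval $[t^n-\tau,\,t^n)$, whose Lebesgue measure is $\tau$. For \eqref{intti} it then suffices to exchange the finite sum with the integral and use $\int_\xR\chi^n_\tau(t)\,\ddt=\tau$ for each $n$, so that $\int_\xR\sum_{n=1}^N\alpha_n\chi^n_\tau(t)\,\ddt=\tau\sum_{n=1}^N\alpha_n$ by linearity.

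For \eqref{inttii} the key idea is to pass from the truncated sum to the full sum over all integers. Since each $\chi^n_\tau\ge 0$, one has pointwise $\sum_{n=1}^N\chi^n_\tau(s)\le\sum_{n\in\mathbb{Z}}\chi^n_\tau(s)=:G(s)$, where $G(s)$ simply counts the grid points $n\dt$ lying in $(s,\,s+\tau]$. It therefore suffices to prove $\int_t^{t+\dt}G(s)\,\ds=\tau$, after which \eqref{inttii} follows by monotonicity of the integral applied to the pointwise bound.

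To evaluate $\int_t^{t+\dt}G(s)\,\ds$ I would write $G$ through floor functions: the number of integers $n$ with $s<n\dt\le s+\tau$ equals $\lfloor(s+\tau)/\dt\rfloor-\lfloor s/\dt\rfloor$. Using the elementary fact that $\int_c^{c+1}\lfloor w\rfloor\,{\rm d}w=c$ for every real $c$, the substitutions $w=(s+\tau)/\dt$ and $w=s/\dt$ give $\int_t^{t+\dt}\lfloor(s+\tau)/\dt\rfloor\,\ds=t+\tau$ and $\int_t^{t+\dt}\lfloor s/\dt\rfloor\,\ds=t$, whose difference is exactly $\tau$. Alternatively one may note that $G$ is $\dt$-periodic (shifting $s$ by $\dt$ reindexes the sum by one) and integrate over a single period; either route yields the exact value $\tau$ for the full-integer integral.

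The main obstacle — indeed the only nontrivial point — is recognizing that the truncation to $1\le n\le N$ is precisely what turns the exact equality $\int_t^{t+\dt}G=\tau$ into the inequality \eqref{inttii}, since discarding the finitely many nonnegative terms with $n\le 0$ or $n>N$ can only decrease the integrand. I expect the floor-function bookkeeping to be the most error-prone step, because the half-open conventions must be handled consistently so that the count of integers in $(s/\dt,\,(s+\tau)/\dt]$ is correct; the periodicity observation is a clean way to sidestep part of that care.
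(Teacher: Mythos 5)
Your proof is correct. For \eqref{intti} you argue exactly as the paper does: $\chi^n_\tau$ is the indicator of $[t^n-\tau,\,t^n)$, which has measure $\tau$, and one sums. For \eqref{inttii}, however, you take a genuinely different route. The paper's proof uses the translation identity $\chi^n_\tau(t)=\chi^0_\tau(t-t^n)$ to rewrite $\int_t^{t+\dt}\sum_{n=1}^N\chi^n_\tau(s)\ds$ as $\sum_{n=1}^N\int_{t-t^n}^{t-t^n+\dt}\chi^0_\tau(s)\ds$, and then observes that the intervals $[t-n\dt,\,t-(n-1)\dt)$ are pairwise disjoint, so the sum of integrals is bounded by $\int_\xR\chi^0_\tau=\tau$. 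You instead complete the sum to all of $\mathbb{Z}$, identify the resulting counting function as $G(s)=\lfloor(s+\tau)/\dt\rfloor-\lfloor s/\dt\rfloor$, and compute $\int_t^{t+\dt}G(s)\ds=\tau$ exactly via $\int_c^{c+1}\lfloor w\rfloor\,{\rm d}w=c$ (or by $\dt$-periodicity of $G$), recovering the inequality by discarding the nonnegative terms with $n\le 0$ or $n>N$. Both arguments exploit the uniformity of the grid $t^n=n\dt$; the paper's is shorter and requires no bookkeeping with floor functions, while yours has the minor bonus of showing that the inequality in \eqref{inttii} is an equality for the lattice-completed sum, which makes transparent exactly where the slack comes from. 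Your floor-function count of integers in $(s/\dt,\,(s+\tau)/\dt]$ and the evaluation of the two integrals check out, so there is no gap.
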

\begin{proof}
The function $\chi^n_\tau(t)$ is equal to one for $t \in [ t^n-\tau ,\ t^n)$, so we have:
\[
\int_\xR \left[ \sum_{n=1}^N \alpha_n \chi^n_\tau(t) \right] \ddt= 
\sum_{n=1}^N \alpha_n \int_{t^n-\tau}^{t^n} \ddt.
\]
To obtain the inequality \eqref{inttii} we remark that $t \in [ t^n-\tau ,\ t^n)$ is equivalent to $t-t^n \in [-\tau ,\ 0)$ and so $\chi^n_\tau(t)=1$ is equivalent to $\chi^0_\tau(t-t^n)=1$ (under the assumption $t^0=0$).
We thus have:
\[
\int_t ^{t+\dt}\left[ \sum_{n=1}^N \chi^n_\tau(s) \right] \ds=
\sum_{n=1}^N \int_{t-t^n} ^{t-t^n+\dt} \chi^0_\tau(s) \ds \leq \int_\xR \chi^0_\tau(s) \ds = \tau.
\]
\end{proof}

Let us remark that the above theorem may be generalized to the Banach setting as in \cite{eym-07-conv}. 
In fact, this has led to some adaptations of the Aubin-Simon compactness result to the discrete setting see \cite{gal-12-comp, che-14-mac}.
Note however that the compactness result is somewhat stronger here than in the general Banach setting, in the sense that the estimate on the time translates is bounded by a power of the translation. 
This is possible thanks to the fact that the norms $\Vert \cdot \Vert_\ast$ and $\Vert \cdot \Vert^\ast$ are dual.

 \section{Regularity of the limit}

\begin{defi}[$B$-limit included sequence of spaces]\label{def-blinc}
Let $B$ be a Banach space, $(X_n)_\nnn$ be a sequence of Banach spaces included in $B$ and $X$ be a Banach space included in $B$.
We say that the sequence $(X_n)_{n \in \xN}$ is $B$-limit-included in $X$ if there exist $C \in \xR$ such that if $u$ is the limit in $B$ of a subsequence of a sequence $(u_n)_\nnn$ verifying $u_n\in X_n$ and $\norm{u_n}{X_n} \le 1$, then $u \in X$ and $\norm{u}{X} \le C$.
\end{defi}

\begin{theorem}[Regularity of the limit]\label{theo-regulimit} Let $1\le p < +\infty$ and $T>0$.
Let $B$ be a Banach space, $(X_n)_\nnn$ be a sequence of Banach spaces included in $B$ and $B$-limit-included in $X$ (where $X$ is a Banach space included in $B$) in the sense of Definition \ref{def-blinc}.

Let $T>0$ and, for $\nnn$, let $u_n \in L^p((0,T),X_n)$.
Let us assume that the sequence $(\norm{u_n}{L^p((0,T),X_n)})_\nnn$ is bounded and that $u_n \to u$ in $L^p((0,T),B)$ as $\nti$. 
Then $u \in L^p((0,T),X)$.
\end{theorem}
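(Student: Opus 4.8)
The plan is to reduce the statement to a pointwise-in-time estimate furnished by the $B$-limit-included property of Definition \ref{def-blinc}, and then to integrate that estimate by means of Fatou's lemma. Since $u_n \to u$ in $L^p((0,T),B)$, I first extract an increasing map $\sigma\colon\xN\to\xN$ such that $u_{\sigma(j)}(t)\to u(t)$ in $B$ for almost every $t\in(0,T)$. Writing $M$ for a bound of $\|u_n\|_{L^p((0,T),X_n)}$ and setting
\[
\phi(t)\eqdef\liminf_{j\to+\infty}\|u_{\sigma(j)}(t)\|_{X_{\sigma(j)}},
\]
each map $t\mapsto\|u_{\sigma(j)}(t)\|_{X_{\sigma(j)}}$ is measurable (as $u_{\sigma(j)}$ is strongly $X_{\sigma(j)}$-measurable), so $\phi$ is measurable, and Fatou's lemma gives $\int_0^T\phi^p\ddt\le\liminf_j\int_0^T\|u_{\sigma(j)}(t)\|_{X_{\sigma(j)}}^p\ddt\le M^p$. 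In particular $\phi\in L^p(0,T)$ and $\phi(t)<+\infty$ for almost every $t$.

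The core step is to show that, for almost every $t$, one has $u(t)\in X$ with $\|u(t)\|_X\le C\,\phi(t)$, where $C$ is the constant of Definition \ref{def-blinc}. Fix such a $t$, so that both $u_{\sigma(j)}(t)\to u(t)$ in $B$ and $\phi(t)<+\infty$, and fix $\varepsilon>0$. By definition of the $\liminf$, I select indices $m_1<m_2<\cdots$ lying in the range of $\sigma$ with $u_{m_k}(t)\to u(t)$ in $B$ and $\|u_{m_k}(t)\|_{X_{m_k}}\le\phi(t)+\varepsilon$ for every $k$. I then build an auxiliary sequence $(v_m)_{m\in\xN}$ fitting Definition \ref{def-blinc}: set $v_m\eqdef u_m(t)/(\phi(t)+\varepsilon)$ when $m=m_k$ for some $k$, and $v_m\eqdef 0$ otherwise. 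The crucial point is that the index–space correspondence is preserved, since $u_{m_k}(t)\in X_{m_k}$; hence $v_m\in X_m$ and $\|v_m\|_{X_m}\le1$ for every $m$, as required. The subsequence $(v_{m_k})_k$ converges in $B$ to $u(t)/(\phi(t)+\varepsilon)$, so the $B$-limit-included hypothesis yields $u(t)/(\phi(t)+\varepsilon)\in X$ with $\|u(t)/(\phi(t)+\varepsilon)\|_X\le C$. Letting $\varepsilon\downarrow0$ gives $u(t)\in X$ and $\|u(t)\|_X\le C\,\phi(t)$ (the degenerate case $\phi(t)=0$ forcing $u(t)=0$).

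It then remains to assemble the global conclusion: the pointwise bound integrates to $\int_0^T\|u(t)\|_X^p\ddt\le C^p\int_0^T\phi(t)^p\ddt\le(C\,M)^p<+\infty$, so $u$ takes values in $X$ almost everywhere with $p$-integrable $X$-norm. I expect the main obstacle to be the bookkeeping of the indices in the construction of $(v_m)$: one must keep each selected term $u_{m_k}(t)$ in its own space $X_{m_k}$ so that the literal hypothesis of Definition \ref{def-blinc} applies, which is precisely what forbids the naive relabelling of the convergent subsequence. The only remaining point requiring care is the strong $X$-measurability of $t\mapsto u(t)$; since $u$ is already strongly $B$-measurable and $\|u(\cdot)\|_X$ is dominated by $\phi\in L^p(0,T)$, this follows from a Pettis-type argument (weak $X$-measurability obtained by approximating functionals on $X$ by functionals on $B$, together with the separability of $X$), after which $u\in L^p((0,T),X)$ is immediate.
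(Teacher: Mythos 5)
Your proof is correct and follows essentially the same route as the paper's: extract a subsequence converging in $B$ almost everywhere, apply the $B$-limit-included property pointwise in $t$ to get $\norm{u(t)}{X}\le C\liminf_n\norm{u_n(t)}{X_n}$, and integrate via Fatou's lemma. The only difference is that you spell out the details the paper leaves implicit, namely the $\varepsilon$-normalization needed to fit Definition \ref{def-blinc} literally and the strong $X$-measurability of $u$, both of which are handled appropriately.
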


\begin{proof}
Since $u_n\to u$ in $L^p((0,T),B)$ as $\nti$, we can assume, up to subsequence, that $u_n \to u$ in $B$ a.e..

Then, since the the sequence $(X_n)_\nnn$ is $B$-limit-included in $X$, we obtain, with $C$ given by Definition \ref{def-blinc},
\[
\norm{u}{X} \le C \liminf_{\nti} \norm{u_n}{X_n} \textrm{~~a.e.}.
\]
Using now Fatou's lemma, we have
\[
\int_0^T \norm{u(t)}{X}^p dt \le C^p \int_0^T \liminf_{\nti} \norm{u_n(t)}{X_n}^p dt
\le C^p \liminf_{\nti} \int_0^T \norm{u_n(t)}{X_n}^p dt.
\]
Then, since $(\norm{u_n}{L^p((0,T),X_n)})_\nnn$ is bounded, we conclude that $u \in L^p((0,T),X)$. \end{proof}

%
% --------------------------------------------------------------------------------------------------------------------------------
%
\bibliographystyle{abbrv}
\bibliography{./rayt}
\end{document}